\tikzstyle arrowstyle=[scale=1]
\tikzstyle directed=[postaction={decorate,decoration={markings,
		mark=at position .55 with {\arrow[scale=2, arrowstyle]{stealth}}}}]
\tikzstyle reverse directed=[postaction={decorate,decoration={markings,
		mark=at position .5 with {\arrowreversed[scale=2, arrowstyle]{stealth};}}}]
\newcommand{\zed}{\ensuremath{\mathbb Z}}
\newcommand{\eff}{\ensuremath{\mathbb F}}
\newcommand{\Topstrut}[1]{\rule{0pt}{#1ex}}
\newcommand{\Botstrut}[1]{\rule[-#1ex]{0pt}{0pt}} 
\newtheorem{Theorem}{Theorem}[section]
\newtheorem{Definition}{Definition}[section]
\newtheorem{Example}{Example}[section]
\newtheorem{Remark}{Remark}[section]
\newtheorem{Lemma}[Theorem]{Lemma}
\newtheorem{Corollary}[Theorem]{Corollary}
\title{Asymmetric All-or-nothing Transforms}
\author{Navid Nasr Esfahani}
\author{Douglas R.\ Stinson\thanks{D.R.\ Stinson's research is supported by  NSERC discovery grant RGPIN-03882.
}}
\affil{David R.\ Cheriton School of Computer Science\\ University of Waterloo\\
Waterloo, Ontario, N2L 3G1, Canada%\\ {\tt dstinson@uwaterloo.ca}
}
\date{\today}
\begin{document}
\maketitle

\begin{abstract}
	In this paper, we initiate a study of \emph{asymmetric all-or-nothing transforms} (or \emph{asymmetric AONTs}). A (symmetric) $t$-all-or-nothing transform is a bijective mapping defined on the set of $s$-tuples over a specified finite alphabet. It is required that  knowledge of all but $t$ outputs leaves any $t$ inputs completely undetermined. There have been numerous  papers developing the theory of AONTs as well as presenting various applications of AONTs in cryptography and information security. 
	
	In this paper, we replace the parameter $t$ by two parameters $t_o$ and $t_i$, where $t_i \leq t_o$. The requirement is that knowledge of all but $t_o$ outputs leaves any $t_i$ inputs completely undetermined. When $t_i < t_o$, we refer to the AONT as \emph{asymmetric}.
	
We give several constructions and bounds for various classes of asymmetric AONTs, especially those with 
$t_i = 1$ or $t_i = 2$. We pay particular attention to  \emph{linear} transforms, where the alphabet is a finite field $\eff_q$ and the mapping is linear.
\end{abstract}

\section{Introduction}
In this paper, we study \emph{asymmetric all-or-nothing transforms}, which 
we define informally as follows.

\begin{Definition}
\label{def1}
Suppose $s$ is a positive integer and $\phi : \Gamma^s \rightarrow \Gamma^s$, where $\Gamma$ is a finite set of size $v$ (called an \emph{alphabet}). Thus $\phi$ is a function that maps an input $s$-tuple 
$\mathbf{x} = (x_1, \dots  , x_s)$ to
output $s$-tuple 
$\mathbf{y} = (y_1, \dots  , y_s)$. 
Suppose $t_i$ and $t_o$ are integers such that $1 \leq t_i \le t_o \le s$.

The function $\phi$ is an \emph{$(t_i,t_o,s,v)$-all-or-nothing transform} (or $(t_i,t_o,s,v)$-AONT) provided that the following properties are
satisfied:
\begin{enumerate}
\item  $\phi$ is a bijection.
\item  If any $s - t_o$ of the $s$ outputs $y_1, \dots , y_s$ are fixed, then the values of any $t_i$ inputs  $x_i$ (for $1 \leq i \leq s$) are completely undetermined.
\end{enumerate}
\end{Definition}

\begin{Remark}
{\rm It is not difficult to see that $t_i \leq t_o$ if a $(t_i,t_o,s,v)$-AONT exists, as follows. If only $t_o$ are outputs are unknown, then the number of possible values taken on by any subset of the inputs is at most $v^{t_o}$. Since a subset of  $t_i$ inputs must be completely undetermined,  we must have $v^{t_i} \leq v^{t_o}$, or $t_i \leq t_o$.}
\end{Remark}

All-or-nothing-transforms (AONTs) were invented in 1997 by Rivest \cite{R}. Rivest's work concerned AONTs that are computationally secure. %These transforms were originally designed as a mode of operation for block ciphers that can impede brute-force attacks. 
Some early papers on various generalizations of AONTs include \cite{Boyko,CDHKS,Desai}.
Stinson \cite{St} introduced and studied all-or-nothing transforms in the setting of unconditional security.
Further work focussing on the existence of unconditionally secure AONTs can be found in \cite{DES,EGS, ES,ES2,WCJ,ZZWG}. AONTs have had numerous applications in cryptography and information security; see \cite{Phd} for an overview.

Rivest's original definition in \cite{R} corresponded to the special case $t_i= t_o = 1$. Most research since then has involved AONTs where $t_i = t_o = t$ for some positive integer $t$. (Such an AONT is often denoted as
a $(t,s,v)$-AONT in the literature.)
In such an AONT, knowing all but $t$ outputs leaves any $t$ inputs undetermined. 
Here we mainly consider AONTs where $t_i < t_o$. Such an AONT can be thought of \emph{asymmetric} in the sense that the number of missing outputs is greater than the number of inputs about which we are seeking information. In general, asymmetric AONTs are easier to construct than AONTs in which $t_i = t_o$ because the requirements are weaker. 

The first example of asymmetric AONTs in the literature is apparently found in Stinson \cite[\S 2.1]{St}. We present this construction in Example \ref{Ex:even_bastion}.

\begin{Example}
	\label{Ex:even_bastion}
	{\rm 
	For $s$ even, a $(1,2,s,2)$-AONT exists as follows. Given $s$ inputs $x_1, \dots , x_s \in \zed_2$, define
\[
\begin{array}{l}
r = \displaystyle \sum_{i = 1}^s x_i\Botstrut{4}\\
y_i = r + x_i, \text{ for } 1 \leq i \leq s.
\end{array}
\]
This yields the $s$ outputs $y_1, \dots , y_s$. The inverse transformation is computed as
	\[
\begin{array}{l}
r' = \displaystyle \sum_{i = 1}^s y_i\Botstrut{4}\\
x_i = r' + y_i, \text{ for } 1 \leq i \leq s.
\end{array}
\]

	%Actually, this transform also works over any field $\eff_{2^n}$.
	
	Suppose we are given $s-2$  of the $s$ outputs, so two outputs are missing. It is clear that each input depends on $s-1$ outputs: $x_i$ is a function of all the $y_j$'s, except for $y_i$. Thus, if two outputs are missing, then no values can be ruled out for $x_i$. $\blacksquare$
	}
\end{Example}

We note that the construction given in Example \ref{Ex:even_bastion} only works for even $s$ (when $s$ is odd, the mapping is not invertible). A construction for odd values of $s$ will be given later (see Lemma \ref{OddBastion}).

Karame et al.~\cite{BastionAONT} introduced \emph{Bastion}, which is a scheme for securely dispersing a document over multiple cloud storage services. Bastion involves encrypting a plaintext using counter mode and then applying a $(1,2,s,2)$-AONT to the resulting ciphertext blocks. The paper \cite{BastionAONT}  considered a threat model where the adversary may have access to the key or use a backdoor to decrypt the ciphertext. To protect against these threats, assuming the adversary cannot access at least two parts, they suggest to divide the ciphertext into multiple parts and store each part on a different server after applying the AONT. 

\subsection{Our Contributions}
Our goal in this paper is to develop the basic mathematical theory of asymmetric AONTs. In Section \ref{security.sec}, we discuss a combinatorial approach to asymmetric AONTs, and we examine how different combinatorial definitions impact the security of the transforms. We also present some connections with other combinatorial structures such as orthogonal arrays and split orthogonal arrays. Section \ref{linear.sec} focusses on existence and bounds for linear asymmetric AONTs. We complete the solution of the existence problem for $t_i = 1$, as well as when $t_i = 2$ and $t_o = s-1$. Then we turn to cases where $t_i \geq 2$. We  prove a general necessary condition for existence, and then we consider the case $t_i = 2$ in detail. New existence results are obtained from computer searches. Finally, Section \ref{summary.sec} is a brief summary.

We note that many of the results in this paper were first presented in the PhD thesis of the first author
 \cite{Phd}.

\section{Combinatorial Definitions and Security Properties}
\label{security.sec}

Definition \ref{def1} is phrased in terms of security properties, i.e., it specifies information about a subset of inputs that can be deduced if only a certain subset of the outputs is known. (As mentioned in the introduction, we are studying AONTs in the setting of unconditional security.) It is useful to employ a combinatorial description of AONTs in order to analyze them from a mathematical point of view. 
Combinatorial definitions of AONTs have appeared in numerous papers, beginning in \cite{St}. However, the connections between security definitions and combinatorial definitions turn out to be a bit subtle, as was recently shown by Esfahani and Stinson \cite{ES2}.

First, as noted in \cite{ES2}, there are two possible ways to interpret the security requirement. In the original definition of AONT due to Rivest \cite{R}, as well as in Definition \ref{def1}, we only require that the values of any $t_i$ inputs are \emph{completely undetermined}, given the values of $s - t_o$ outputs.
In other words, assuming that every possible input $s$-tuple occurs with positive probability, the probability that the $t_i$ specified inputs take on any specified possible values (given all but $t_o$ outputs) is positive. This notion is termed \emph{weak security} in\cite{ES2}.

An alternative notion that is discussed in detail in \cite{ES2} is that of \emph{perfect security}. Here, we require that the \emph{a posteriori} distribution on any $t_i$ inputs, given the values of $s - t_o$ outputs, is identical to the \emph{a priori} distribution on the same inputs. Thus, \emph{no information} about any $t_i$ inputs is revealed when $s - t_o$ outputs are known. 

The standard combinatorial definition for $(t,t,s,v)$-AONT (see, e.g., \cite{DES,EGS}) involves certain unbiased arrays. We review this definition now and discuss when weak or perfect security can be attained (the security  may depend on the probability distribution defined on the input $s$-tuples). Then we generalize this approach to handle the slightly more complicated case of asymmetric AONTs.

An \emph{$(N,k,v)$-array} is an $N$ by $k$ array, say $A$, whose entries are elements chosen from an alphabet $\Gamma$ of order $v$.  
Suppose the $k$ columns of $A$ are
labelled by the elements in the set $C$.  
Let $D \subseteq C$, and define $A_D$ to be the array obtained from $A$
by deleting all the columns $c \notin D$.
We say that $A$ is \emph{unbiased} with respect to $D$ if the rows of
$A_D$ contain every $|D|$-tuple of elements of $\Gamma$ 
exactly $N / v^{|D|}$ times. Of course, this requires that $N$ is divisible by $v^{|D|}$.

An AONT, say $\phi$, is a bijection from $\Gamma$ to $\Gamma$, where $\Gamma$ is a $v$-set.
The \emph{array representation} of $\phi$ is a  $(v^s,2s,v)$-array, say $A$, that is constructed as follows:
For every input $s$-tuple $(x_1, \dots , x_s) \in \Gamma^s$, there is a row of $A$ containing the entries $x_1, \dots , x_s, y_1, \dots , y_s$, 
where $\phi(x_1, \dots , x_s) = (y_1, \dots , y_s)$. 

Our combinatorial definition of an AONT, Definition \ref{defunbiased}, involves 
arrays that are unbiased with respect to certain subsets of columns. This definition is an obvious generalization of previous definitions for $(t,t,s,v)$-AONTs from \cite{DES,EGS}.

\begin{Definition}
\label{defunbiased}
A \emph{$(t_i,t_o,s,v)$-all-or-nothing transform} is a $(v^s,2s,v)$-array, say $A$, with columns 
labelled $1, \dots , 2s$, 
that is unbiased with respect to the following subsets of columns:
\begin{enumerate}
\item $\{1, \dots , s\}$,
\item $\{s+1, \dots , 2s\}$, and
\item $I \cup J$, 
         for all $I \subseteq \{1,\dots , s\}$ with $|I| = t_i$ and all 
         $J \subseteq \{s+1,\dots , 2s\}$ with $|J| = s-t_o$.
\end{enumerate}
\end{Definition}

We interpret the first $s$ columns of $A$ as indexing the $s$ inputs and the last $s$ columns as indexing the $s$ outputs.
Then, as mentioned above, properties 1 and 2 ensure that the array $A$ defines a bijection $\phi$. Property 3 
guarantees that knowledge of any $s-t_o$ outputs does not rule out any possible values for any $t_i$ inputs.

The following results concerning $(t,t,s,v)$-AONTs are from \cite{ES2}.

\begin{Theorem}
\label{sym.thm}
Suppose $\phi : \Gamma^s \rightarrow \Gamma^s$ is a bijection, where $\Gamma$ is an alphabet of size $v$, and suppose $1 \leq t\leq s$. \vspace{-.2in}\\
	\begin{enumerate}
	\item  Suppose any input $s$-tuple occurs with positive probability. Then the mapping $\phi$ is  a weakly secure AONT if and only if its array representation is a $(t,t,s,v)$-AONT.
	\item The mapping $\phi$ is a perfectly secure AONT if and only if its array representation is a $(t,t,s,v)$-AONT and every input $s$-tuple occurs with the same probability.
	\end{enumerate}
\end{Theorem}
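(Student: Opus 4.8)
The plan is to prove the two equivalences by unpacking what the array-unbiasedness conditions in Definition~\ref{defunbiased} say about conditional probabilities, and then matching these to the notions of weak and perfect security. Throughout, I will work with the array representation $A$ of $\phi$, whose rows are the $v^s$ pairs $(\xx, \yy)$ with $\yy = \phi(\xx)$. Since $\phi$ is already assumed to be a bijection, conditions~1 and~2 of Definition~\ref{defunbiased} (unbiasedness with respect to the input columns and the output columns separately) hold automatically: each input $s$-tuple appears in exactly one row and, by bijectivity, so does each output $s$-tuple. Hence the entire content of ``$A$ is a $(t,t,s,v)$-AONT'' reduces to condition~3, the unbiasedness with respect to $I \cup J$ for every $t$-subset $I$ of input columns and every $(s-t)$-subset $J$ of output columns.

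For part~1, I would fix an arbitrary $t$-subset $I$ of inputs and an arbitrary $(s-t)$-subset $J$ of outputs, and translate the event ``the $s-t$ outputs indexed by $J$ take specified values'' into a subset of rows of $A$. Weak security says that, conditioned on any realizable assignment to these $s-t$ outputs, every possible assignment to the $t_i = t$ inputs in $I$ retains positive probability. Under the hypothesis that every input $s$-tuple occurs with positive probability, every row of $A$ carries positive weight, so ``positive conditional probability'' is equivalent to ``the corresponding $(I\cup J)$-tuple actually occurs among the rows with the fixed $J$-values.'' The forward direction (unbiased $\Rightarrow$ weakly secure) is then immediate, since an unbiased array realizes every $(I\cup J)$-tuple. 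For the converse I would argue by counting: fixing the $s-t$ output values of $J$ selects exactly $v^t$ rows (because those $s-t$ outputs together with bijectivity leave $t$ degrees of freedom), and weak security forces all $v^{t}$ possible values of the $I$-inputs to appear among these $v^t$ rows, which by the pigeonhole principle means each appears exactly once --- precisely the unbiasedness condition.

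For part~2, perfect security additionally demands that the conditional distribution on the $t$ inputs in $I$ given the $J$-outputs equals the a priori distribution on those inputs. I would first show that perfect security forces the input distribution to be uniform: by summing/averaging the perfect-security condition over all valid choices of the conditioning outputs, or by exploiting that the a posteriori distribution must simultaneously match the a priori one across incompatible conditionings, one deduces that the a priori marginal on every $t$-subset of inputs is uniform, and (taking $t$ large enough, or combining subsets) that the full input distribution is uniform. Conversely, once the input distribution is uniform and the array is a $(t,t,s,v)$-AONT, each of the $v^t$ selected rows has equal probability and the $(I\cup J)$-tuples are equidistributed, so the a posteriori distribution on the $I$-inputs is uniform, matching the uniform a priori distribution; this gives perfect security. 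I would assemble these into the stated iff.

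The main obstacle I anticipate is the ``perfect security $\Rightarrow$ uniform input distribution'' step, since perfect security is a statement about conditional distributions that a priori permits a nonuniform base distribution, and one must rule this out cleanly rather than by hand-waving. The cleanest route is probably to observe that perfect security makes the a posteriori distribution on the $I$-inputs independent of which admissible $J$-values we condition on, and then to average over all $J$-conditionings (which partition the rows) to force the a priori marginal itself to coincide with the common a posteriori distribution; combined with the unbiasedness already available on the output side, this pins the marginal down to uniform. Since this is exactly the content proved in \cite{ES2}, I would lean on that reference for the delicate counting and present the argument as a translation between the probabilistic and combinatorial formulations.
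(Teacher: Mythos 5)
A preliminary remark on the comparison itself: the paper contains no proof of Theorem~\ref{sym.thm} --- it is stated with the attribution ``from \cite{ES2}'' and never argued --- so your proposal can only be measured against the argument in that reference, which your write-up also ultimately leans on. Your part~1 is correct and complete: conditions~1 and~2 of Definition~\ref{defunbiased} are indeed automatic for the array representation of a bijection; fixing the $s-t$ outputs indexed by $J$ selects exactly $v^t$ rows; under the positivity hypothesis, weak security forces all $v^t$ values of the inputs in $I$ to appear among those rows, hence each exactly once by pigeonhole, which (since $|I\cup J|=s$ and $N=v^s$) is precisely unbiasedness with respect to $I\cup J$. The ``if'' direction of part~2 is likewise fine: with uniform inputs each of the $v^t$ selected rows has probability $v^{-s}$, so the a posteriori distribution on $\xx_I$ is uniform and matches the uniform a priori marginal.

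The genuine gap is in the ``only if'' direction of part~2, exactly where you flag difficulty, and the mechanism you propose does not close it. Averaging the perfect-security identity over the $J$-conditionings is vacuous: by the law of total probability the a priori marginal on $\xx_I$ is \emph{automatically} the $\Pr[\yy_J=\beta]$-weighted average of the a posteriori distributions, so the averaging merely returns the hypothesis and yields no uniformity; nor does ``unbiasedness on the output side'' help, since that constrains which tuples occur in rows, not their probabilities. What is actually needed is an interplay between \emph{distinct} conditionings. Assuming positivity of all inputs (which part~2 as stated omits, but cannot do without: a point-mass input distribution is vacuously perfectly secure for any bijection $\phi$, so some standing assumption from \cite{ES2} is implicit; similarly $t<s$ is needed, else $J=\emptyset$ and the conditioning is empty), perfect security implies weak security, so part~1 gives unbiasedness; then each row is uniquely determined by the pair $(\xx_I,\yy_J)$ and perfect security becomes the factorization $p(\mathrm{row})=\Pr[\xx_I=\alpha]\cdot\Pr[\yy_J=\beta]$. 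Comparing this factorization for the same $I$ and two different sets $J_1\neq J_2$, and cancelling the positive factor $\Pr[\xx_I=\alpha]$, shows that $\Pr[\yy_{J_1}=\yy_{J_1}(R)]=\Pr[\yy_{J_2}=\yy_{J_2}(R)]$ for every row $R$; since every output $s$-tuple occurs in some row, $\Pr[\yy_{J_1}=\cdot\,]$ can depend only on the coordinates in $J_1\cap J_2$, and intersecting over all choices of $J_2$ forces it to be constant, i.e., the $\yy_J$-marginals are uniform. Feeding this back into the factorization gives $p(\mathrm{row})=v^{-(s-t)}\Pr[\xx_I=\xx_I(\mathrm{row})]$ for \emph{every} $t$-subset $I$, and intersecting over all $I$ makes $p$ constant, hence uniform. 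This cross-conditioning argument is the missing idea; as a self-contained proof attempt, deferring it to \cite{ES2} amounts to citing the theorem being proved --- although, in fairness, that is exactly what the paper itself does.
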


When we turn to asymmetric AONTs, there is an additional subtlety, namely that we can obtain weak security for combinatorial structures that are weaker than the arrays defined  in Definition \ref{defunbiased}.
We can characterize asymmetric AONTs achieving weak security in terms of arrays that satisfy covering properties with respect to certain sets of columns. 
As before, suppose $A$ is 
an $(N,k,v)$-array, whose entries are elements chosen from an alphabet $\Gamma$ of order $v$ and whose  
 columns  are labelled by the the set $C$.  
Also, for $D \subseteq C$,  define $A_D$ as before.
We say that $A$ is
\emph{covering} with respect to a subset of columns $D \subseteq C$ if the rows of
$A_D$ contain every $|D|$-tuple of elements of $\Gamma$ 
\emph{at least once}.

\begin{Remark}
{\rm
An array that satisfies the covering property for all subsets of $t$ columns is called a \emph{$t$-covering array}. Such arrays have many important applications, including software testing. See \cite[\S VI.10]{CD} for a brief survey of covering arrays.
}
\end{Remark}

We state a few simple observations without proof.

\begin{Lemma} 
\label{unbiased-covering.lem}
Suppose $A$ is  an $(N,k,v)$-array with columns labelled by $C$. 
\begin{enumerate}
\item If $A$ is unbiased or covering with respect to $D \subseteq C$, then $N \geq v^{|D|}$.
\item If $A$ is unbiased with respect to $D \subseteq C$, then $A$ is covering with respect to $D$.
\item If $D \subseteq C$ and $N = v^{|D|}$, then $A$ is unbiased with respect to $D$ if and only if $A$ is covering with respect to $D$.
\item If $A$ is unbiased or covering with respect to $D \subseteq C$, then $A$ is unbiased or covering (resp.) with respect to all $D' \subseteq D$.
\end{enumerate}
\end{Lemma}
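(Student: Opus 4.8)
The plan is to establish all four parts by elementary counting directly from the definitions of \emph{unbiased} and \emph{covering}, proving them in an order that lets the later parts reuse the earlier ones. Throughout, I would write $d = |D|$, observe that there are exactly $v^d$ distinct $d$-tuples over $\Gamma$, and note that each of the $N$ rows of the projected array $A_D$ contributes exactly one such $d$-tuple.

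First I would prove part 2. By the unbiased hypothesis, every $d$-tuple occurs exactly $N/v^d$ times in $A_D$; since the $v^d$ multiplicities are nonnegative integers summing to $N \geq 1$, this common multiplicity $N/v^d$ is a positive integer, so in particular every $d$-tuple occurs at least once, which is exactly the covering property. Part 1 then follows quickly: in the covering case, the $v^d$ distinct $d$-tuples must each appear among the $N$ rows, forcing $N \geq v^d$; the unbiased case is immediate from part 2, or directly from the fact that $N/v^d$ is a positive integer.

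For part 3 I would assume $N = v^d$. The implication from unbiased to covering is just part 2. For the converse, if $A$ is covering then each of the $v^d$ tuples appears at least once, but there are only $N = v^d$ rows; a counting (pigeonhole) argument then forces each tuple to appear exactly once, that is, $N/v^d$ times, which is the unbiased property.

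Finally, for part 4 I would fix $D' \subseteq D$ and set $d' = |D'|$. In the covering case, any $d'$-tuple is the restriction to the columns of $D'$ of some $d$-tuple, which by hypothesis occurs in $A_D$; restricting that row to $D'$ exhibits the $d'$-tuple, so $A$ is covering with respect to $D'$. In the unbiased case, each $d'$-tuple is the restriction of exactly $v^{d-d'}$ distinct $d$-tuples (the columns in $D \setminus D'$ ranging freely over $\Gamma$), each occurring $N/v^d$ times; summing gives $v^{d-d'} \cdot N/v^d = N/v^{d'}$ occurrences, the required multiplicity. The arguments are routine, and the only point demanding care is the multiplicity bookkeeping in the unbiased case of part 4, where one must count the extensions of a $d'$-tuple to a full $d$-tuple correctly; I do not anticipate any deeper obstacle.
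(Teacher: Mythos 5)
Your proof is correct in all four parts: the divisibility/positivity argument for part 2, the pigeonhole counting for parts 1 and 3, and the multiplicity bookkeeping $v^{d-d'} \cdot N/v^{d} = N/v^{d'}$ for the unbiased case of part 4 all check out, including the degenerate cases ($D' = \emptyset$ or $D = \emptyset$) which your counts handle automatically. The paper states this lemma explicitly \emph{without} proof (``We state a few simple observations without proof''), and your elementary projection-and-counting argument is precisely the standard justification the authors evidently have in mind, so your write-up simply supplies the routine details they omitted.
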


\begin{Definition}
\label{weakAONT}
A \emph{$(t_i,t_o,s,v)$-weak-all-or-nothing transform} is a $(v^s,2s,v)$-array, say $A$, with columns 
labelled $1, \dots , 2s$, 
that is covering with respect to the following subsets of columns:
\begin{enumerate}
\item $\{1, \dots , s\}$,
\item $\{s+1, \dots , 2s\}$, and
\item $I \cup J$, 
         for all $I \subseteq \{1,\dots , s\}$ with $|I| = t_i$ and all 
         $J \subseteq \{s+1,\dots , 2s\}$ with $|J| = s-t_o$.
\end{enumerate}
\end{Definition}

We note that a $(t,t,s,v)$-weak-AONT is equivalent to a $(t,t,s,v)$-AONT. This follows immediately from Lemma \ref{unbiased-covering.lem}. However, a $(t_i,t_o,s,v)$-weak-AONT is not necessarily a 
$(t_i,t_o,s,v)$-AONT if $t_i < t_0$. Example \ref{weakAONT.exam} depicts a 
$(1,2,3,2)$-weak-AONT that is not a  
$(1,2,3,2)$-AONT.

\begin{Example}
\label{weakAONT.exam}
{\rm
We present a $(1,2,3,2)$-weak AONT over the alphabet $\{a,b\}$. The array representation of this AONT is as follows:
		\begin{center}
			\begin{tabular}{|c|c|c||c|c|c|} \hline
				$x_1$ & $x_2$ & $x_3$ & $y_1$ & $y_2$ & $y_3$ \\ \hline
				a & a & a & a & a & a \\
				a & a & b & b & b & a \\
				a & b & a & b & a & b \\
				a & b & b & b & a & a \\
				b & a & a & a & b & b \\
				b & a & b & a & b & a \\
				b & b & a & a & a & b \\
				b & b & b & b & b & b \\\hline				
			\end{tabular}
		\end{center}

	This array is biased with respect to various pairs of columns $(x_i, y_j)$. For example, we verify that this array is biased with respect to columns $x_1$ and $y_1$. Specifically, the ordered pairs $(a,a)$ and $(b,b)$ each occur once, but the ordered pairs $(a,b)$ and $(b,a)$ each occur three times. 
		
	However, for all choices of $x_i$ and $y_j$, it can be verified that $A$ is covering 
	with respect to the pair of columns $(x_i, y_j)$.} $\blacksquare$
\end{Example}

The following theorem extends part of Theorem \ref{sym.thm} to the asymmetric case.  Proofs are omitted, as they are essentially the same as the proofs in \cite{ES2}.

\begin{Theorem}
\label{asym.thm}
Suppose $\phi : \Gamma^s \rightarrow \Gamma^s$ is a bijection, where $\Gamma$ is an alphabet of size $v$, and suppose $1 \leq t_i \leq t_o \leq s$. \vspace{-.2in}\\
	\begin{enumerate}
	\item  Suppose any input $s$-tuple occurs with positive probability. Then the mapping $\phi$ is  weakly secure  if and only if its array representation is a $(t_i,t_o,s,v)$-weak-AONT.
	\item The mapping $\phi$ is  perfectly secure if its array representation is a $(t_i,t_o,s,v)$-AONT and every input $s$-tuple occurs with the same probability.
	\end{enumerate}
\end{Theorem}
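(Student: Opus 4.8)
The plan is to prove both directions of Theorem~\ref{asym.thm} by translating the probabilistic security conditions into statements about the multiset of rows in the array representation, exactly as in the symmetric case of Theorem~\ref{sym.thm}. Throughout, let $A$ be the array representation of $\phi$, so that $A$ has exactly one row for each input $s$-tuple, and observe that properties~1 and~2 of Definition~\ref{weakAONT} (covering with respect to $\{1,\dots,s\}$ and $\{s+1,\dots,2s\}$) hold automatically: the first because every input $s$-tuple appears, the second because $\phi$ is a bijection. Thus only property~3 carries content, and the entire argument reduces to relating the covering condition on the mixed column sets $I \cup J$ to the weak-security statement about $t_i$ inputs given $s - t_o$ outputs.

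For part~1, I would fix a subset $I$ of $t_i$ input-indices, a subset of $s - t_o$ output-indices (equivalently a set $J$ of the complementary $s - t_o$ output columns that are \emph{revealed}), a target assignment $\mathbf{a} \in \Gamma^{t_i}$ for the inputs in $I$, and a revealed output pattern. The key observation is that, under the assumption that every input $s$-tuple occurs with positive probability, the conditional probability $\Pr[x_I = \mathbf{a} \mid y_J = \text{(revealed values)}]$ is strictly positive if and only if \emph{some} row of $A$ simultaneously carries $\mathbf{a}$ in the columns $I$ and the revealed values in the columns $J$. Weak security demands this positivity for every choice of $\mathbf{a}$ and every consistent revealed pattern, which is precisely the assertion that $A_{I \cup J}$ contains every $(t_i + (s-t_o))$-tuple at least once, i.e. the covering property of Definition~\ref{weakAONT}(3). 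Quantifying the forward and reverse implications over all $I$, $J$, $\mathbf{a}$ gives the equivalence; the positive-probability hypothesis is what makes ``positive conditional probability'' coincide with ``nonempty preimage,'' decoupling the combinatorics from the particular distribution.

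For part~2, which is only a one-directional (sufficiency) claim, I would assume $A$ is a genuine $(t_i,t_o,s,v)$-AONT (unbiased, not merely covering) and that the input distribution is uniform. Here the unbiasedness with respect to $I \cup J$ means each of the $v^{t_i + (s-t_o)}$ patterns occurs in exactly $v^s / v^{t_i + (s-t_o)}$ rows. Because the distribution is uniform, counting rows is the same as computing probabilities, so one computes $\Pr[x_I = \mathbf{a} \mid y_J = \text{(revealed values)}]$ directly as a ratio of these equal block sizes and checks it equals $v^{-t_i}$, the a~priori probability of $x_I = \mathbf{a}$ under the uniform distribution, independent of the revealed outputs. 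This is exactly perfect security for the inputs in $I$, and ranging over all $I$ of size $t_i$ completes the argument.

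I expect the main obstacle to be purely expository rather than mathematical: since Theorem~\ref{asym.thm} explicitly states that the proofs are ``essentially the same as the proofs in \cite{ES2},'' the real work is confirming that nothing in the asymmetric generalization breaks the two clean correspondences above --- in particular that the shift from unbiased arrays (perfect security) to merely covering arrays (weak security) is exactly the combinatorial shadow of the shift from ``the a~posteriori distribution equals the a~priori distribution'' to ``no input value is ruled out.'' One subtlety worth flagging is the asymmetry itself: when $t_i < t_o$ the covering condition on $A_{I\cup J}$ no longer forces unbiasedness (as Example~\ref{weakAONT.exam} demonstrates), which is precisely why part~2 is stated only as a sufficient condition and why the converse of part~2 --- true in the symmetric Theorem~\ref{sym.thm} --- is deliberately absent here.
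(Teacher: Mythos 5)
Your proposal is correct and follows exactly the route the paper intends: the paper omits the proof with a pointer to \cite{ES2}, and the standard argument there is precisely your translation of weak security into the covering property via the positive-probability hypothesis (with bijectivity making every revealed output pattern consistent), and of perfect security into the row-counting computation $\Pr[x_I = \mathbf{a} \mid y_J = \mathbf{b}] = v^{-t_i}$ from unbiasedness under the uniform distribution. Your closing remark correctly identifies why part~2 is stated only as a sufficiency (cf.\ Remark~\ref{converse.rem} and Example~\ref{weakAONT.exam}); the only blemish is the garbled word ``complementary'' in describing $J$, which should simply be the set of $s-t_o$ revealed output columns, as your argument in fact uses.
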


\begin{Remark}
\label{converse.rem}
The second part of Theorem \ref{sym.thm} is ``if and only if''. However, we do not know if the converse of the second part of Theorem \ref{asym.thm} is true when $t_i < t_o$.
\end{Remark}

\subsection{General Properties}

In the rest of the paper, we focus on $(t_i,t_o,s,v)$-AONTs that satisfy Definition \ref{defunbiased}. These are the AONTs that are unbiased with respect to various subsets of columns. First, we record various general properties about these AONTs. Some of these results are generalizations of previous results pertaining to $(t,t,s,v)$-AONT, and most of them follow easily from Lemma \ref{unbiased-covering.lem}.

The following result was shown in \cite{WCJ} for the case $t_i = t_o$. The generalization to arbitrary $
t_i\leq t_o$ is obvious.

\begin{Theorem}
	\label{inverse1.cor}
	A mapping $\phi: \mathcal{X}^s\to \mathcal{X}^s$ is a $(t_i,t_o,s,v)$-AONT if and only if $\phi^{-1}$ is an $(s-t_o,s-t_i,s,v)$-AONT.
\end{Theorem}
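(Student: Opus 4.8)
The plan is to work entirely with the array representation and to exploit the evident symmetry between $\phi$ and $\phi^{-1}$. First I would observe that the array representation $A'$ of $\phi^{-1}$ is obtained from the array representation $A$ of $\phi$ simply by interchanging the two blocks of columns, namely the input block $\{1, \dots, s\}$ and the output block $\{s+1, \dots, 2s\}$. This is immediate from the construction of the array representation together with the fact that $\phi(x_1, \dots, x_s) = (y_1, \dots, y_s)$ if and only if $\phi^{-1}(y_1, \dots, y_s) = (x_1, \dots, x_s)$: the two arrays have exactly the same multiset of rows, each row merely having its first $s$ entries and its last $s$ entries swapped.

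Next I would make explicit the one structural fact that drives the whole argument: being unbiased with respect to a set $D$ of columns depends only on $D$ as a subset of columns and is invariant under any relabeling or reordering of the columns. Consequently $A'$ is unbiased with respect to a given set of columns if and only if $A$ is unbiased with respect to the image of that set under the block swap.

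Then I would check that the three conditions of Definition \ref{defunbiased} required for $\phi^{-1}$ to be an $(s-t_o,\, s-t_i,\, s,\, v)$-AONT correspond exactly, under this swap, to the three conditions required for $\phi$ to be a $(t_i, t_o, s, v)$-AONT. Conditions 1 and 2 simply exchange roles, since each asks for unbiasedness with respect to a full block of $s$ columns. For condition 3, applied to $\phi^{-1}$ with inner parameter $s-t_o$ and outer parameter $s-t_i$, one requires unbiasedness with respect to $I' \cup J'$ for every subset $I'$ of the first block of $A'$ with $|I'| = s-t_o$ and every subset $J'$ of the second block of $A'$ with $|J'| = s - (s - t_i) = t_i$. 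After the swap, $I'$ is a set of $s-t_o$ output columns of $A$ and $J'$ is a set of $t_i$ input columns of $A$, so these are precisely the sets $I \cup J$ with $|I| = t_i$ and $|J| = s - t_o$ appearing in condition 3 for $\phi$.

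Since every equivalence above is a biconditional and the whole correspondence is symmetric, I would conclude that $\phi$ is a $(t_i, t_o, s, v)$-AONT if and only if $\phi^{-1}$ is an $(s-t_o,\, s-t_i,\, s,\, v)$-AONT. There is no substantive obstacle here; the only point requiring care is the parameter bookkeeping in the previous paragraph, namely verifying that the substitution $(t_i, t_o) \mapsto (s - t_o,\, s - t_i)$ really returns the index sizes in condition 3 to the pair $(t_i,\, s - t_o)$ via $s - (s - t_i) = t_i$, rather than to some other pair. I expect that to be the entirety of the "hard part."
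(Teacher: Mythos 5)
Your proposal is correct and takes exactly the same approach as the paper, whose entire proof is the single sentence ``Interchange the first $s$ columns and the last $s$ columns in the array representation of the AONT $\phi$.'' You have simply made explicit the details the paper leaves implicit: that swapping the blocks yields the array representation of $\phi^{-1}$, that unbiasedness is invariant under the swap, and that the substitution $(t_i, t_o) \mapsto (s-t_o,\, s-t_i)$ carries the three conditions of Definition~\ref{defunbiased} for $\phi^{-1}$ back to those for $\phi$ via $s-(s-t_i)=t_i$.
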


\begin{proof} Interchange the first $s$ columns and the last $s$ columns in the array representation of
the AONT $\phi$. 
\end{proof}

An \emph{orthogonal array} \emph{OA$(t,k, v)$}
is a $(v^{t},n,v)$ array, say $A$, that  is unbiased with respect to any $t$ columns.
The next theorem generalizes \cite[Corollary 35]{EGS}.

\begin{Theorem}
	\label{OAthenAONT}
	If there exists an OA$(s,2s,v)$, then there exists a $(t_i,t_0,s,v)$-AONT
	for all $t_i$ and $t_o$ such that $1 \leq t_i \leq t_o \leq s$.
\end{Theorem}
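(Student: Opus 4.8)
The plan is to show that the very array witnessing the OA$(s,2s,v)$ is already a $(t_i,t_o,s,v)$-AONT in the sense of Definition \ref{defunbiased}, with no modification required. First I would observe that an OA$(s,2s,v)$ is by definition a $(v^s,2s,v)$-array, which is precisely the shape demanded of a $(t_i,t_o,s,v)$-AONT; so the candidate array $A$ has the right dimensions, and it remains only to verify the three unbiasedness requirements of Definition \ref{defunbiased}.

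For the first two requirements the column sets $\{1,\dots,s\}$ and $\{s+1,\dots,2s\}$ each have exactly $s$ elements, so the defining property of an orthogonal array---that $A$ is unbiased with respect to \emph{any} $s$ columns---applies directly and yields both conditions immediately.

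The third requirement is the only one needing a short argument, and it is where the hypothesis $t_i \le t_o$ enters. Here I would fix arbitrary $I \subseteq \{1,\dots,s\}$ with $|I| = t_i$ and $J \subseteq \{s+1,\dots,2s\}$ with $|J| = s - t_o$. Since $I$ and $J$ are drawn from disjoint ranges of column indices, they are disjoint, so $|I \cup J| = t_i + (s - t_o) = s - (t_o - t_i) \le s$. Thus $I \cup J$ is contained in some set $D$ of exactly $s$ columns; the orthogonal array is unbiased with respect to $D$, and by the downward-closure property recorded in Lemma \ref{unbiased-covering.lem}(4), $A$ is then unbiased with respect to the subset $I \cup J$ as well. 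As $I$ and $J$ were arbitrary, the third condition holds, completing the verification.

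I do not expect any genuine obstacle: the result is a direct verification rather than a construction, precisely because the orthogonality strength $s$ dominates every relevant column set. The single point worth stating carefully is the cardinality bound $|I \cup J| = t_i + s - t_o \le s$, which is exactly the place the hypothesis $t_i \le t_o$ is used and which guarantees that $I \cup J$ never exceeds the strength $s$; coupling this bound with Lemma \ref{unbiased-covering.lem}(4) is what reduces condition 3 of Definition \ref{defunbiased} to the defining property of the orthogonal array.
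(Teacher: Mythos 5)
Your proposal is correct and follows essentially the same route as the paper's proof: the paper likewise verifies Definition \ref{defunbiased} directly for the OA$(s,2s,v)$, citing the inequality $1 \leq t_i + s - t_o \leq s$ together with the downward-closure of unbiasedness from Lemma \ref{unbiased-covering.lem}. You have merely written out in full the two lines the paper compresses, including the correct identification of where the hypothesis $t_i \leq t_o$ is used.
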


\begin{proof} 
It suffices to show that an OA$(s,2s,v)$ satisfies the conditions of Definition \ref{defunbiased}.
This follows immediately from Lemma \ref{unbiased-covering.lem}  and the observation that
\[ 1 \leq t_i + s - t_o \leq s\]
for all $t_i$ and $t_o$ such that $1 \leq t_i \leq t_o \leq s$.
\end{proof}

Levenshtein \cite{Levenshtein} defined \emph{split orthogonal arrays} (or \emph{SOAs}) as follows. A
split orthogonal array \emph{SOA$(t_1,t_2;n_1,n_2; v)$}
is a $(v^{t_1+t_2},n_1+n_2,v)$ array, say $A$, that satisfies the following properties:
\begin{enumerate}
	\item the columns of $A$ are partitioned into two sets, of sizes $n_1$ and $n_2$, respectively, and
	\item $A$ is unbiased with respect to any $t_1+t_2$ columns in which $t_1$ columns are chosen from the first set of $n_1$ columns and
	$t_2$ columns are chosen from the second set of $n_2$ columns.
\end{enumerate} 
From the definition of split orthogonal arrays, we can immediately obtain the following theorem.

\begin{Theorem}
\label{SOA.thm}
	Suppose there exists a $(t_i, t_o, s, v)$-AONT. Then there exists an SOA$(t_i, s-t_o, s, s,v)$.
\end{Theorem}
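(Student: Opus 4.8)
The plan is to prove the theorem by reading the defining conditions of an SOA directly off the array representation of the AONT, so that essentially no construction is needed. First I would let $A$ denote the array representation of the given $(t_i,t_o,s,v)$-AONT, which is a $(v^s,2s,v)$-array, and equip it with the column partition already built into the AONT: the first part is the set of input columns $\{1,\dots,s\}$ and the second part is the set of output columns $\{s+1,\dots,2s\}$. This supplies the two groups of sizes $n_1=s$ and $n_2=s$ required by an SOA$(t_i,s-t_o;s,s;v)$.

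The core step is to observe that the split-unbiasedness demanded of the SOA is \emph{verbatim} condition (3) of Definition \ref{defunbiased}. An SOA$(t_i,s-t_o;s,s;v)$ requires $A$ to be unbiased with respect to any $t_i$ columns taken from the first group together with any $s-t_o$ columns taken from the second group; Definition \ref{defunbiased}(3) asserts precisely that $A$ is unbiased with respect to $I\cup J$ for every $I\subseteq\{1,\dots,s\}$ with $|I|=t_i$ and every $J\subseteq\{s+1,\dots,2s\}$ with $|J|=s-t_o$. Because $1\le t_i\le t_o\le s$ forces $1\le t_i+(s-t_o)\le s$, these column sets are always of admissible size, so the match is exact with $t_1=t_i$ and $t_2=s-t_o$. (Conditions (1) and (2) of Definition \ref{defunbiased} are not needed for this conclusion.)

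The main obstacle, and the one place I would be careful, is the number of rows. Unbiasedness of $A$ with respect to a column set of size $t_i+(s-t_o)$ means every such tuple occurs $v^{s}/v^{\,t_i+s-t_o}=v^{\,t_o-t_i}$ times, so $A$ itself is a split orthogonal array of index $v^{\,t_o-t_i}$. When $t_i=t_o$ this index is $1$ and $A$ is literally the $(v^{\,t_i+s-t_o},2s,v)$-array of the stated definition; when $t_i<t_o$ one must either read the SOA with the general index $v^{\,t_o-t_i}$ (as in Levenshtein's original formulation \cite{Levenshtein}) or exhibit a sub-array on $v^{\,t_i+s-t_o}$ rows realizing index $1$, and the latter is not generally extractable from a higher-index array. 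I expect the former interpretation to be intended --- the phrase ``immediately obtain'' points to a direct reading rather than an extraction argument --- so my proof would record the index explicitly and conclude that $A$ is the desired SOA, with the asymmetric case carrying index $v^{\,t_o-t_i}$.
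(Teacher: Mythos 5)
Your proposal is correct and takes essentially the same route as the paper's proof: both simply read the array representation of the AONT with the input and output columns as the two groups of sizes $n_1=n_2=s$, and observe that condition (3) of Definition \ref{defunbiased} is verbatim the split-unbiasedness required of an SOA with $t_1=t_i$ and $t_2=s-t_o$. Your careful remark about the row count is a point the paper's proof silently glosses over --- the array has $v^s$ rows, so for $t_i<t_o$ each relevant tuple occurs $v^{t_o-t_i}$ times and one obtains an SOA of index $v^{t_o-t_i}$ rather than the index-$1$ ($v^{t_i+s-t_o}$-row) array literally demanded by the paper's stated definition --- and your general-index reading, consistent with Levenshtein's original formulation, is the right resolution, since as you note an index-$1$ subarray cannot in general be extracted.
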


\begin{proof}
	Consider the array representation of a $(t_i,t_o,s,q)$-AONT. Denote $n_1 = s, n_2=s, t_1 = t_i$ and $t_2 = s-t_o $. Fixing any  $t_2$ outputs does not yield any information about any $t_1$ inputs. Hence, the array is unbiased with respect to any $s-t_o+t_i$ columns where $t_i$ columns are chosen from the first set of $s$ columns and $s - t_o$ columns are chosen from the second set of $s$ columns. Therefore the array is an SOA$(t_i, s-t_o, s, s,v)$.
	\end{proof}

Theorems \ref{OAthenAONT} and \ref{SOA.thm} show that, in a certain sense, AONTs (symmetric and asymmetric) are ``between'' orthogonal arrays and split orthogonal arrays. More precisely,   
an OA$(s,2s,v)$ implies the existence of a $(t_i, t_o, s, v)$-AONT (for $1 \leq t_i \leq t_o \leq s$), which in turn implies the existence of an SOA$(t_i, s-t_o, s, s,v)$.

\section{Linear Asymmetric AONTs}
\label{linear.sec}

Suppose $q$ is a prime power. If every output  of a $(t_i,t_o,s,v)$-AONT is an $\eff_q$-linear function of the inputs,  the AONT is a \emph{linear} $(t_i,t_o,s,q)$-AONT. Note that we will write a linear 
$(t_i,t_o,s,q)$-AONT in the form $\mathbf{y} =  \mathbf{x} M^{-1}$, where $M$ is an invertible $s$ by $s$ matrix over $\eff_q$ (as always, $\mathbf{x}$ is an input $s$-tuple and $\mathbf{y}$ is an output $s$-tuple). Of course it holds also that $\mathbf{x} = \mathbf{y}M$.

\begin{Remark}
\label{bastion-linear}
The $(1, 2, s, 2)$-AONT described in Example \ref{Ex:even_bastion} (for even values of $s$)  is a linear AONT, where 
$M$ is the $s$ by $s$ matrix with $0$'s on the diagonal and $1$'s elsewhere.
When $s$ is even, $M$ is invertible and $M^{-1} = M$.
\end{Remark}

The following lemma generalizes \cite[Lemma 1]{DES}.

\begin{Lemma}
	\label{linearAsymAONT}
	Suppose that $q$ is a prime power and $M$ is 
	an invertible $s$ by $s$ matrix with entries from $\eff_q$. Suppose $1 \leq t_i \leq t_o \leq s$. 
	Then the function $\mathbf{y} =  \mathbf{x} M^{-1}$ defines a linear $(t_i,t_o,s,q)$-AONT
	if and only if every $t_o$ by $t_i$ submatrix of $M$ has rank $t_i$.
\end{Lemma}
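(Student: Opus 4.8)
The plan is to work entirely with the combinatorial characterization in Definition \ref{defunbiased} and translate its three unbiasedness requirements into rank conditions on $M$. Since $M$ is invertible, the map $\mathbf{y} = \mathbf{x}M^{-1}$ is a bijection, so its array representation contains each input $s$-tuple exactly once and each output $s$-tuple exactly once; this immediately settles conditions 1 and 2 of Definition \ref{defunbiased}. All of the content therefore lies in condition 3: for every $I \subseteq \{1,\dots,s\}$ with $|I| = t_i$ and every $J \subseteq \{s+1,\dots,2s\}$ with $|J| = s - t_o$, the array must be unbiased with respect to $I \cup J$.

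The first step is a general linear-algebra observation. If $L \colon \eff_q^s \to \eff_q^m$ is $\eff_q$-linear, then as $\mathbf{y}$ ranges over $\eff_q^s$ the values $L(\mathbf{y})$ hit every element of $\eff_q^m$ equally often (namely $q^{s-m}$ times) if and only if $L$ is surjective, i.e.\ has rank $m$. I would apply this to the map $L_{I,J}$ sending $\mathbf{y}$ to the tuple $(\mathbf{x}_I, \mathbf{y}_J)$, where $\mathbf{x}_I$ collects the inputs indexed by $I$ and $\mathbf{y}_J$ collects the outputs indexed by $J$. Unbiasedness of the array with respect to $I \cup J$ is then exactly the statement that $L_{I,J}$ has rank $t_i + (s - t_o)$.

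Next I would write $L_{I,J}$ down explicitly. Using $\mathbf{x} = \mathbf{y}M$ with row vectors, each input $x_k = \sum_j y_j M_{jk}$, so as a functional of $\mathbf{y}$ it is given by the $k$-th column $M_{\cdot k}$ of $M$, while each retained output $y_j$ is the coordinate functional $e_j$. Thus the matrix of $L_{I,J}$ has rows $\{M_{\cdot k} : k \in I\}$ together with the standard basis rows $\{e_j : j \in J\}$. The $|J| = s - t_o$ independent coordinate rows span the coordinate subspace on $J$, so the total rank equals $(s - t_o)$ plus the rank of the vectors $\{M_{\cdot k} : k \in I\}$ after their entries in the positions of $J$ are deleted. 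Those truncated columns form precisely the $t_o \times t_i$ submatrix $M[\overline{J}, I]$ of $M$ obtained by keeping the $t_o$ rows outside $J$ and the $t_i$ columns in $I$.

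Combining these, $L_{I,J}$ has rank $t_i + s - t_o$ if and only if $M[\overline{J}, I]$ has rank $t_i$. Finally, as $J$ ranges over all $(s - t_o)$-subsets of the output columns its complement $\overline{J}$ ranges over all $t_o$-subsets of row indices, and $I$ ranges over all $t_i$-subsets of column indices, so the matrices $M[\overline{J}, I]$ are exactly the $t_o \times t_i$ submatrices of $M$. Hence condition 3 holds for every $I$ and $J$ if and only if every $t_o \times t_i$ submatrix of $M$ has rank $t_i$, which is the claim. I expect the only delicate point to be the index bookkeeping — confirming that the inputs correspond to the \emph{columns} of $M$ while the complement of the output index set $J$ corresponds to the \emph{rows} — together with making the rank-reduction step (absorbing the $e_j$ rows) fully rigorous.
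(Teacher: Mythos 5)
Your proof is correct and is essentially the paper's argument recast in joint rather than conditional form: the paper fixes the $s-t_o$ known outputs and writes the selected inputs as an affine map $\mathbf{x}' = \mathbf{y}'M'' + \mathbf{c}$ of the $t_o$ unknown outputs, where $M''$ is exactly your $M[\overline{J},I]$, so both proofs reduce condition 3 of Definition \ref{defunbiased} to that $t_o \times t_i$ submatrix having rank $t_i$ via surjectivity/uniformity of a linear map over $\eff_q$. If anything, your rank-absorption of the coordinate rows $e_j$ makes the equal-frequency requirement (unbiasedness, not merely covering) fully explicit, where the paper's phrase ``takes on all values'' leaves the uniform fiber count implicit.
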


\begin{proof}
  Suppose $I,J \subseteq \{1, \dots , s\}$, $|I| = t_i$, $|J| = t_o$.
	Let ${\mathbf x'} = (x_i : i \in I)$. We have ${\mathbf x'} = {\mathbf y} M'$, where $M'$ is the $s$ by $t_i$  matrix formed from $M$ by deleting all columns not in $I$.
	Now assume that $y_j$ is fixed for all $j \not\in J$ and denote ${\mathbf y'} = (y_j : j \in J)$. Then we can write
	${\mathbf x'} = {\mathbf y'} M'' + {\mathbf c}$, where $M''$ is the $t_o$ by $t_i$ 
submatrix of $M$ formed from $M$ by deleting all columns not in $I$ and all rows not in $J$, 
and ${\mathbf c}$ is a vector of constants.
	If $M''$ is of rank $t_i$, then ${\mathbf x'}$ is completely undetermined, in the sense that
	${\mathbf x'}$ takes on all values in $(\eff_q)^{t_i}$ as ${\mathbf y'}$ 
	varies over $(\eff_q)^{t_o}$. On the other hand, if $t' =\mathsf{rank}(M'')  < t_i$, then
	${\mathbf x'}$ can take on only $(\eff_q)^{t'}$ possible values.%, where $t' =\mathsf{rank}(M(I,J))  < t_i$.
\end{proof}

The following corollaries pertain to the special case where $t_i = t_o = t$. 

\begin{Corollary} 
\cite{DES}
	Suppose $M$ is an  invertible $s$ by $s$ matrix with entries from $\eff_q$. Then $\mathbf{y}  = \mathbf{x} M^{-1}$ 
	defines a linear $(t,t,s,q)$-AONT if and only if every $t$ by $t$ submatrix of $M$ is invertible.
\end{Corollary}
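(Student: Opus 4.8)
The plan is to obtain this Corollary as an immediate specialization of Lemma \ref{linearAsymAONT}. Setting $t_i = t_o = t$ in that lemma, the conclusion reads: the function $\mathbf{y} = \mathbf{x}M^{-1}$ defines a linear $(t,t,s,q)$-AONT if and only if every $t$ by $t$ submatrix of $M$ has rank $t$. So the whole task reduces to translating this rank condition into the language of invertibility stated in the Corollary.

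The single observation required is that the relevant submatrices $M''$ are now square, of size $t$ by $t$. For a square matrix over a field, having full rank is equivalent to being nonsingular: rank $t$ means the rows (equivalently, the columns) are linearly independent, which holds if and only if the determinant is nonzero, which holds if and only if the matrix is invertible. Substituting this equivalence into the criterion produced by Lemma \ref{linearAsymAONT} yields precisely the statement that every $t$ by $t$ submatrix of $M$ is invertible, as required.

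There is no genuine obstacle here; the result is a direct corollary, and the argument is purely the remark that full rank and invertibility coincide for square matrices. The only point that would need a moment of care in the general lemma, namely keeping track of which dimension of the submatrix plays the role of $t_i$ and which plays the role of $t_o$, disappears entirely in this case, since both parameters equal $t$ and the submatrices in question are square.
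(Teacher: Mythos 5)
Your proposal is correct and matches the paper exactly: the paper presents this Corollary without proof, as the immediate specialization $t_i = t_o = t$ of Lemma \ref{linearAsymAONT}, with the rank-$t$ condition on square $t \times t$ submatrices silently translated into invertibility. Your explicit remark that full rank and nonsingularity coincide for square matrices over a field is precisely the one-line justification the paper leaves implicit.
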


\begin{Corollary}
	\label{inverse2.cor}
	Suppose that $\mathbf{y} = \mathbf{x} M^{-1}$ defines a linear $(t,t,s,q)$-AONT. Then $\mathbf{y}  = \mathbf{x} M$ defines a linear $(s-t,s-t,s,q)$-AONT.
\end{Corollary}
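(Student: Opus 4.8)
The plan is to derive this directly from Theorem \ref{inverse1.cor}, specialized to $t_i = t_o = t$. By hypothesis the mapping $\phi$ given by $\mathbf{y} = \mathbf{x} M^{-1}$ is a $(t,t,s,q)$-AONT, so Theorem \ref{inverse1.cor} immediately yields that $\phi^{-1}$ is an $(s-t, s-t, s, q)$-AONT. The only remaining point is to identify $\phi^{-1}$ with the transform in the statement: since $\mathbf{y} = \mathbf{x} M^{-1}$ is equivalent to $\mathbf{x} = \mathbf{y} M$, the inverse map satisfies $\phi^{-1}(\mathbf{u}) = \mathbf{u} M$, which after relabelling the input as $\mathbf{x}$ is exactly $\mathbf{y} = \mathbf{x} M$. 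As $M$ is invertible, this is a linear transform in the required sense, and hence a linear $(s-t,s-t,s,q)$-AONT. In this route there is essentially no obstacle beyond correctly tracking which matrix represents $\phi^{-1}$; all the content is already packaged in Theorem \ref{inverse1.cor}.

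Alternatively, one can give a self-contained matrix-theoretic argument. By the preceding corollary, $\mathbf{y} = \mathbf{x} M^{-1}$ is a $(t,t,s,q)$-AONT if and only if every $t$ by $t$ submatrix of $M$ is invertible, while $\mathbf{y} = \mathbf{x} M = \mathbf{x} (M^{-1})^{-1}$ is an $(s-t,s-t,s,q)$-AONT if and only if every $(s-t)$ by $(s-t)$ submatrix of $M^{-1}$ is invertible. It therefore suffices to establish the purely linear-algebraic equivalence that every $t$ by $t$ minor of $M$ is nonzero if and only if every complementary $(s-t)$ by $(s-t)$ minor of $M^{-1}$ is nonzero.

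This equivalence is Jacobi's identity on complementary minors of an inverse matrix: for index sets $I, J \subseteq \{1, \dots, s\}$ with $|I| = |J| = s - t$, one has
\[
\det\!\big((M^{-1})[I,J]\big) = \pm \frac{\det\!\big(M[J^c, I^c]\big)}{\det M},
\]
where $[\,\cdot\,,\cdot\,]$ selects the indicated rows and columns and $I^c, J^c$ denote the complements (each of size $t$). Since $\det M \neq 0$, the $(s-t)$-minor of $M^{-1}$ indexed by $(I,J)$ vanishes precisely when the complementary $t$-minor of $M$ indexed by $(J^c, I^c)$ vanishes. Letting $(I,J)$ range over all pairs of $(s-t)$-subsets makes $(J^c, I^c)$ range over all pairs of $t$-subsets, giving the desired equivalence and hence the corollary. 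The only real obstacle in this second route is invoking Jacobi's identity with the correct indexing of complementary blocks; the sign is irrelevant, since we care only about nonvanishing of minors.
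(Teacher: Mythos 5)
Your first argument is correct and is essentially the paper's own: the corollary is stated there without proof, immediately after Theorem \ref{inverse1.cor}, and the intended derivation is precisely yours --- apply Theorem \ref{inverse1.cor} with $t_i=t_o=t$ and identify $\phi^{-1}(\mathbf{u})=\mathbf{u}M=\mathbf{u}(M^{-1})^{-1}$, which is linear with matrix $M^{-1}$ in the paper's convention. Your second route via Jacobi's identity is also correct but genuinely different: instead of working at the level of array representations (where Theorem \ref{inverse1.cor} amounts to swapping the input and output column blocks, an argument that applies to arbitrary, not necessarily linear, AONTs), it stays entirely within linear algebra, combining the $t\times t$ submatrix characterization with the complementary-minor identity $\det\big((M^{-1})[I,J]\big)=\pm\det\big(M[J^c,I^c]\big)/\det M$. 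What this buys: it simultaneously proves Corollary \ref{linear-st} (which the paper likewise leaves unproved, and which otherwise follows by chaining the present corollary with the $t\times t$ characterization), and it exhibits the duality concretely at the matrix level, pinpointing exactly which $(s-t)$-minor of $M^{-1}$ corresponds to which $t$-minor of $M$. What it costs: it is specific to the linear case and requires care with the complementary indexing $(I,J)\mapsto(J^c,I^c)$, which you handle correctly, including the observation that the sign is irrelevant since only nonvanishing of minors matters.
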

%The proof of Corollary \ref{inverse2.cor} follows from Corollary \ref{inverse1.cor}.

\begin{Corollary}
	\label{linear-st}
	Suppose $M$ is an  invertible $s$ by $s$ matrix with entries from $\eff_q$. Then $\mathbf{y}  = \mathbf{x} M^{-1}$ 
	defines a linear $(t,t,s,q)$-AONT if and only if every $s-t$ by $s-t$ submatrix of $M^{-1}$ is invertible.
\end{Corollary}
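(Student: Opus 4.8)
The plan is to derive this from two applications of the linear characterization in Lemma~\ref{linearAsymAONT}, bridged by the inverse duality of Theorem~\ref{inverse1.cor}, rather than attacking the minors of $M^{-1}$ directly. The point is that the condition we want to reach (``every $(s-t)\times(s-t)$ submatrix of $M^{-1}$ is invertible'') is exactly the condition Lemma~\ref{linearAsymAONT} would produce for the \emph{inverse} transform at the \emph{complementary} parameter, so the whole statement should fall out of the duality already established.

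First I would record the symmetric specialization of Lemma~\ref{linearAsymAONT}: taking $t_i = t_o = t$, the lemma says that $\mathbf{y} = \mathbf{x}M^{-1}$ defines a $(t,t,s,q)$-AONT if and only if every $t\times t$ submatrix of $M$ has rank $t$, i.e.\ is invertible (this is the preceding corollary from \cite{DES}). Next I would invoke Theorem~\ref{inverse1.cor} in the case $t_i = t_o = t$: the bijection $\phi : \mathbf{x} \mapsto \mathbf{x}M^{-1}$ is a $(t,t,s,q)$-AONT if and only if $\phi^{-1}$ is an $(s-t,s-t,s,q)$-AONT. Since $\mathbf{x} = \mathbf{y}M$, the inverse map is $\phi^{-1} : \mathbf{x} \mapsto \mathbf{x}M$, so I obtain the equivalence: $\mathbf{y} = \mathbf{x}M^{-1}$ is a $(t,t,s,q)$-AONT if and only if $\mathbf{y} = \mathbf{x}M$ is an $(s-t,s-t,s,q)$-AONT.

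The third and final step is to apply Lemma~\ref{linearAsymAONT} once more, this time to the inverse transform $\mathbf{y} = \mathbf{x}M$. To put it in the standard form $\mathbf{y} = \mathbf{x}N^{-1}$ required by the lemma, I set $N = M^{-1}$, so that the matrix playing the role of ``$M$'' in the lemma is $M^{-1}$ and the parameter is $s-t$. The lemma then states that $\mathbf{y} = \mathbf{x}M$ is an $(s-t,s-t,s,q)$-AONT if and only if every $(s-t)\times(s-t)$ submatrix of $M^{-1}$ is invertible. Chaining the three equivalences yields the claim. Since Lemma~\ref{linearAsymAONT} and Theorem~\ref{inverse1.cor} are both ``if and only if'' statements, the resulting chain is an equivalence in both directions, as required.

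I do not expect a serious obstacle here; the only place that needs care is the bookkeeping in the last step, namely correctly identifying that when $\mathbf{y} = \mathbf{x}M$ is recast as $\mathbf{y} = \mathbf{x}N^{-1}$ it is $N = M^{-1}$ whose $(s-t)\times(s-t)$ submatrices the characterization concerns, while simultaneously the parameter flips from $t$ to $s-t$. As a sanity check (and an alternative proof staying purely in linear algebra), I would note Jacobi's identity for complementary minors of the inverse: for invertible $M$, each $(s-t)\times(s-t)$ minor of $M^{-1}$ equals, up to sign and the nonzero factor $\det M$, the complementary $t\times t$ minor of $M$; hence all $(s-t)\times(s-t)$ minors of $M^{-1}$ are nonzero precisely when all $t\times t$ minors of $M$ are nonzero. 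This directly confirms the equivalence, but the route through Theorem~\ref{inverse1.cor} is shorter and keeps everything within the framework already developed.
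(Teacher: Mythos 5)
Your proof is correct and is exactly the derivation the paper intends: Corollary~\ref{linear-st} is stated without proof as an immediate consequence of the symmetric case of Lemma~\ref{linearAsymAONT} combined with the inverse duality (Theorem~\ref{inverse1.cor}, i.e.\ Corollary~\ref{inverse2.cor}), which is precisely your chain of equivalences with $N = M^{-1}$ at parameter $s-t$. Your closing remark via Jacobi's identity on complementary minors is a valid independent confirmation, but the main argument matches the paper's route.
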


Another approach to construct asymmetric AONTs is to use $t$-AONTs or other asymmetric AONTs. The following results will present various such constructions. First, we generalize  \cite[Theorem 20]{EGS}.

\begin{Lemma}
	\label{Lem:AsymCofactor}
	If $1 \leq t_i \le t_o < s$, then the existence of a linear $(t_i,t_o,s,q)$-AONT implies the existence of a linear $(t_i,t_o,s-1,q)$-AONT.
\end{Lemma}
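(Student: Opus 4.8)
The plan is to reduce the statement to a fact about matrices via Lemma \ref{linearAsymAONT}, and then obtain the smaller AONT simply by deleting one row and one column from the defining matrix. By Lemma \ref{linearAsymAONT}, the hypothesis gives an invertible $s$ by $s$ matrix $M$ over $\eff_q$ such that every $t_o$ by $t_i$ submatrix of $M$ has rank $t_i$. To produce a linear $(t_i,t_o,s-1,q)$-AONT, it suffices (again by Lemma \ref{linearAsymAONT}) to exhibit an invertible $(s-1)$ by $(s-1)$ matrix $N$ over $\eff_q$ all of whose $t_o$ by $t_i$ submatrices have rank $t_i$. Note that such submatrices exist precisely because $t_o \leq s-1$, which is where the hypothesis $t_o < s$ is used.

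I would take $N$ to be the matrix obtained from $M$ by deleting a single row $r$ and a single column $c$, to be chosen below. The crucial point is that the submatrix condition is inherited for free: any $t_o$ by $t_i$ submatrix of $N$ is specified by a choice of $t_o$ rows and $t_i$ columns of $N$, and since the rows and columns of $N$ are exactly the rows and columns of $M$ other than $r$ and $c$, this same submatrix is a $t_o$ by $t_i$ submatrix of $M$ and hence has rank $t_i$ by hypothesis. Thus, no matter which single row and column we delete, $N$ automatically satisfies the rank requirement.

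The only remaining task is to choose $r$ and $c$ so that $N$ is invertible. This is always possible because $\mathsf{rank}(M) = s$, and so $M$ possesses an $(s-1)$ by $(s-1)$ submatrix of full rank $s-1$ (the rank of a matrix equals the order of its largest nonvanishing minor). Equivalently, since $M$ is invertible its adjugate $\det(M)\,M^{-1}$ is nonzero, and any nonzero entry of the adjugate is, up to sign, an $(s-1)$ by $(s-1)$ minor of $M$; deleting the corresponding row $r$ and column $c$ then yields an invertible $N$. With this choice, $N$ is an invertible $(s-1)$ by $(s-1)$ matrix all of whose $t_o$ by $t_i$ submatrices have rank $t_i$, so by Lemma \ref{linearAsymAONT} the map $\yy = \xx N^{-1}$ is the desired linear $(t_i,t_o,s-1,q)$-AONT.

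I expect the step guaranteeing the invertibility of $N$ to be the only real content of the argument; the rank condition on submatrices transfers trivially under deletion, so essentially all the work collapses to the standard observation that an invertible square matrix always has a nonsingular submatrix of one smaller order.
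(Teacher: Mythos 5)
Your proposal is correct and follows essentially the same route as the paper: the paper also deletes one row and one column, locating an invertible $(s-1)\times(s-1)$ submatrix via cofactor expansion of $\det(M)$ (along the first row, rather than your equivalent adjugate formulation), and likewise observes that every $t_o \times t_i$ submatrix of the smaller matrix is a submatrix of $M$ and so inherits rank $t_i$. Your explicit remark that the hypothesis $t_o < s$ is what guarantees $t_o \times t_i$ submatrices exist in the $(s-1)\times(s-1)$ matrix is a point the paper leaves implicit, but the argument is the same.
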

\begin{proof}
	Let $M$ be a matrix for a linear $(t_i,t_o,s,q)$-AONT. Since $M$ is invertible, if we calculate its determinant using the cofactor expansion of $M$ with respect to its first row, at least one of the $(s-1)\times (s-1)$ submatrices is invertible. Also, any $t_o\times t_i$ submatrix of $M$, including those in the invertible submatrix, are of rank $t_i$. Hence, the invertible submatrix is a $(t_i,t_o,s-1,q)$-AONT.
\end{proof}

\begin{Lemma}\label{Lem:Asym_L_to}
	If  $1 \leq t_i\le t_o \le s$, then the existence of a linear $(t_i,t_o,s,q)$-AONT implies the existence of a linear $(t_i,t^{\prime}_o,s,q)$-AONT for all $t^{\prime}_o$ such that  $t_o \leq t^{\prime}_o \leq  s$.
\end{Lemma}

\begin{proof}
	Consider the matrix representation of the linear $(t_i,t_o,s,q)-$AONT. Every $ t^{\prime}_o$ by $t_i$ submatrix is rank $t_i$, because all its $t_o \times t_i$ submatrices are of rank $t_i$.
\end{proof}

\begin{Lemma}\label{Lem:Asym_S_ti}
	If  $1 \leq t_i\le t_o \le s$, then the existence of a linear $(t_i,t_o,s,q)$-AONT implies the existence of a linear $(t^{\prime}_i,t_o,s,q)$-AONT for any  $t^{\prime}_i$ such that  $1 \leq t^{\prime}_i \leq t_i \leq  s$.
\end{Lemma}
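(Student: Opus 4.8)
The plan is to show that the \emph{same} matrix $M$ that realizes the given linear $(t_i,t_o,s,q)$-AONT already realizes a linear $(t_i',t_o,s,q)$-AONT for every $t_i' \leq t_i$. This mirrors the proof of the companion result Lemma~\ref{Lem:Asym_L_to} (which enlarges $t_o$): in both cases the matrix is unchanged and the argument is a monotonicity property of submatrix ranks. The single tool I would invoke is the characterization in Lemma~\ref{linearAsymAONT}, which says that $M$ gives a linear $(t_i,t_o,s,q)$-AONT precisely when every $t_o$ by $t_i$ submatrix of $M$ has rank $t_i$ (i.e.\ full column rank, since $t_i \leq t_o$). By the same criterion, it suffices to verify that every $t_o$ by $t_i'$ submatrix of $M$ has rank $t_i'$; the invertibility of $M$ (hence the bijection requirement) is inherited automatically, as $M$ is not altered.

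The key step is the following restriction argument. Fix any row set $J \subseteq \{1,\dots,s\}$ with $|J| = t_o$ and any column set $I' \subseteq \{1,\dots,s\}$ with $|I'| = t_i'$, and let $M''$ be the corresponding $t_o$ by $t_i'$ submatrix. Since $t_i' \leq t_i \leq s$, I can enlarge $I'$ to a column set $I \supseteq I'$ with $|I| = t_i$ (there are enough columns available precisely because $t_i \leq s$). The $t_o$ by $t_i$ submatrix of $M$ indexed by rows $J$ and columns $I$ has rank $t_i$ by hypothesis, so its $t_i$ columns are linearly independent. Deleting columns cannot destroy linear independence, so the columns indexed by $I' \subseteq I$ are also linearly independent; hence $M''$ has rank $t_i'$. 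As $J$ and $I'$ were arbitrary, every $t_o$ by $t_i'$ submatrix of $M$ has full column rank $t_i'$, and Lemma~\ref{linearAsymAONT} yields the desired linear $(t_i',t_o,s,q)$-AONT.

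There is essentially no hard obstacle here; the result is weaker in its security requirement, so the same construction survives. The only point requiring a moment's care is the availability of the column extension, which is guaranteed by the standing inequality $t_i \leq s$, and the routine fact that passing to a subset of a linearly independent family preserves independence. I would present the argument concisely in exactly this order: recall the rank criterion, take the same $M$, extend the column index set, invoke the hypothesis for full column rank, and restrict.
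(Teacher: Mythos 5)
Your proof is correct and is exactly the intended argument: the paper actually states this lemma without proof (evidently regarding it as the obvious column-analogue of Lemma~\ref{Lem:Asym_L_to}, whose proof keeps the same matrix and uses the rank criterion of Lemma~\ref{linearAsymAONT}), and your argument---retain $M$, extend any $t_i'$-column set to a $t_i$-column set, invoke the full-column-rank hypothesis, and note that a subset of linearly independent columns remains independent---is precisely that omitted argument, with the minor extension step $t_i \leq s$ correctly justified.
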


\begin{Example}
\label{E232}
We observe that  existence of a linear $(t_i,t_o,s,q)$-AONT does not necessarily imply the existence of a linear $(t_i,t_i,s,q)$-AONT or a linear $(t_o,t_o,s,q)$-AONT. Consider the linear $(2,3,4,2)$-AONT presented by the following matrix
	\[
	\left(\begin{array}{c c c c}
	1 & 0 & 0 & 1 \\
	0 & 1 & 0 & 1 \\
	0 & 0 & 1 & 1 \\
	1 & 1 & 1 & 0
	\end{array} \right).
	\]
	While every $3\times 2$ submatrix of the matrix above is of rank $2$, a $(2,2,s,2)$-AONT does not exist if $s>2$, as was proven by D'Arco et al.\ \cite{DES}. 
	
Additionally,  from Corollary \ref{inverse2.cor}, a linear 	$(3,3,4,2)$-AONT would be equivalent to a linear $(1,1,4,2)$-AONT. Since it was shown in  \cite{St} that an $(1,1,4,2)$-AONT does not exist, we conclude that a	linear 	$(3,3,4,2)$-AONT does not exist. $\blacksquare$
\end{Example}

The main general construction for linear $(t,t,s,q)$-AONTs in \cite{DES} uses Cauchy matrices. We  provide a generalization that applies to asymmetric AONTs.

\begin{Theorem}
Suppose $q \geq 2s$ is a prime power and $1 \leq t_i \leq t_o \leq s$. Then there exists a linear $(t_i,t_o,s,q)$-AONT.
\end{Theorem}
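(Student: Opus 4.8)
The plan is to take $M$ to be a Cauchy matrix, exactly as in the symmetric construction of \cite{DES}, and to observe that the rank condition of Lemma \ref{linearAsymAONT} is actually \emph{easier} to satisfy in the asymmetric case than the square-invertibility condition needed when $t_i = t_o$. Recall that an $s$ by $s$ Cauchy matrix is defined by choosing $2s$ distinct field elements $a_1, \dots , a_s, b_1, \dots , b_s \in \eff_q$ and setting $M_{ij} = (a_i - b_j)^{-1}$. The hypothesis $q \geq 2s$ is precisely what guarantees that $\eff_q$ contains enough distinct elements to make this choice; this is the only place where the bound on $q$ is used.

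First I would record the key structural fact about Cauchy matrices: any $k$ by $k$ submatrix of $M$, obtained by selecting $k$ rows and $k$ columns, is again a Cauchy matrix (on the corresponding sub-lists of the $a_i$ and $b_j$), and hence is nonsingular, since its determinant is a nonzero product of differences of distinct field elements. In particular $M$ itself is invertible, so $\mathbf{y} = \mathbf{x}M^{-1}$ is a well-defined bijection. I would simply cite this standard property (or give the one-line determinant formula) rather than rederive it.

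Next I would verify the rank condition. Fix any $t_o$ by $t_i$ submatrix $M''$ of $M$, say on row set $J$ and column set $I$ with $|J| = t_o$ and $|I| = t_i$. Since $t_i \leq t_o$, choose a subset $J' \subseteq J$ with $|J'| = t_i$; the corresponding $t_i$ by $t_i$ submatrix of $M''$ is a square submatrix of the Cauchy matrix $M$ and is therefore invertible by the structural fact above. Hence $M''$ contains $t_i$ linearly independent rows, so $\mathsf{rank}(M'') = t_i$ (the rank cannot exceed $t_i$, as $M''$ has only $t_i$ columns). Since $M''$ was arbitrary, every $t_o$ by $t_i$ submatrix of $M$ has rank $t_i$, and Lemma \ref{linearAsymAONT} yields the conclusion.

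There is no serious obstacle here; indeed the asymmetry $t_i < t_o$ works in our favour, since a ``tall'' $t_o$ by $t_i$ submatrix needs only full column rank $t_i$, and that is delivered by a single invertible $t_i$ by $t_i$ square sub-block. The one point requiring a little care is the justification (or correct citation) that every square submatrix of a Cauchy matrix is nonsingular, equivalently that submatrices of Cauchy matrices are again Cauchy matrices; once this is stated, the argument is immediate.
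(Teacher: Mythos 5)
Your proof is correct, but it follows a different route from the paper's. The paper disposes of this theorem in two lines: it cites \cite[Theorem 2]{DES} for the existence of a linear $(t,t,s,q)$-AONT whenever $q \geq 2s$ (that theorem is where the Cauchy-matrix construction actually lives), sets $t = t_o$, and then invokes Lemma \ref{Lem:Asym_S_ti} to lower the input parameter from $t_o$ down to $t_i$. You instead unpack the Cauchy construction itself and verify the asymmetric rank condition of Lemma \ref{linearAsymAONT} directly: every square submatrix of a Cauchy matrix is again Cauchy, hence nonsingular, so any $t_o$ by $t_i$ submatrix contains an invertible $t_i$ by $t_i$ block and has full column rank. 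Both arguments rest on the same matrix, but yours is self-contained, does not depend on Lemma \ref{Lem:Asym_S_ti} (which the paper states without proof), and makes explicit the point the paper leaves implicit --- that the ``tall submatrix'' condition for $t_i < t_o$ is genuinely weaker than square invertibility; the paper's reduction buys brevity and reuse of the symmetric-case literature. One small imprecision worth fixing in your write-up: the Cauchy determinant is a \emph{quotient} of products of differences of distinct field elements (numerator $\prod_{i<j}(a_j-a_i)(b_i-b_j)$, denominator $\prod_{i,j}(a_i-b_j)$), not a product; it is nonzero either way, so nothing breaks.
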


\begin{proof} 
In \cite[Theorem 2]{DES}, it was shown that a linear $(t,t,s,q)$-AONT exists if $q \geq 2s$ is a prime power and $1 \leq t \leq s$. Let $t_i = t_o = t$ and then apply Lemma \ref{Lem:Asym_S_ti}. This shows that there
is a a linear $(t'_i,t_o,s,q)$-AONT provided that  $1 \leq t'_i \leq t_o \leq s$.

\end{proof}

\subsection{Linear $(1,t_o,s,q)$-AONT}

We  noted in Remark \ref{bastion-linear} that there exists a linear $(1, 2, s, 2)$-AONT for all even values of $s \geq 2$. 
In the next lemma, we show that   linear $(1,2,s,2)$-AONTs exist for odd values of $s$.

\begin{Lemma}
\label{OddBastion}
There is a  linear $(1,2,s,2)$-AONT for any odd value of $s \geq 3$.
\end{Lemma}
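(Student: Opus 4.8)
We need a linear $(1,2,s,2)$-AONT for odd $s \ge 3$, i.e.\ an invertible $s \times s$ matrix $M$ over $\eff_2$ such that every $2 \times 1$ submatrix has rank $1$. By Lemma \ref{linearAsymAONT}, the rank-$1$ condition on every $2\times 1$ submatrix simply means that no column of $M$ contains two entries that are both zero in the same pair of rows---equivalently, in every pair of rows there are not two zeros in a single column. Unwinding this over $\eff_2$, a $2\times 1$ submatrix fails to have rank $1$ exactly when both its entries are $0$. So the condition is: \emph{every column of $M$ has at most one zero entry}, i.e.\ each column has at least $s-1$ ones. Thus the whole task reduces to exhibiting an invertible $s\times s$ matrix over $\eff_2$ whose every column has at most one zero.

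\textbf{Plan.} The natural candidate, mirroring Remark \ref{bastion-linear}, is the all-ones matrix with zeros placed so each column has exactly one zero. I would first try $M = J - P$ where $J$ is the all-ones matrix and $P$ is a permutation matrix, so that each column (and each row) has exactly one $0$. This automatically satisfies the submatrix rank condition since each column has a single zero. The remaining and only real task is to choose $P$ so that $M$ is invertible over $\eff_2$ for odd $s$. First I would compute $\det(J - P)$; a clean choice is $P = I$, giving $M = J - I$ (zeros exactly on the diagonal), which is precisely the even-$s$ Bastion matrix of Remark \ref{bastion-linear}---but that matrix is singular for odd $s$ (its row sums over $\eff_2$ are $s-1 \equiv 0$), so $J-I$ fails here and a different $P$ is needed.

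\textbf{Key steps.} (1) Reduce the AONT condition, via Lemma \ref{linearAsymAONT}, to ``each column has at most one zero.'' (2) Restrict to matrices $M = J - P$ with $P$ a permutation matrix so the column condition holds automatically. (3) Show invertibility of $J-P$ over $\eff_2$ for odd $s$ by choosing $P$ appropriately. For a concrete, easily-verified construction I would take $P$ to be the permutation with a single fixed point and the rest a product of transpositions---or more simply, take $P$ to be a single $s$-cycle, so $M = J - C$ where $C$ is the cyclic shift matrix. Then $M$ is a circulant, and its invertibility over $\eff_2$ can be checked via the associated polynomial $g(z) = 1 + z + z^2 + \dots + z^{s-1} + z^{\text{(omitted term)}}$; concretely $M$ corresponds to the polynomial $\sum_{i} z^i$ over all positions except the one removed by the cycle, and one verifies $\gcd$ with $z^s - 1$ is $1$ for odd $s$.

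\textbf{Main obstacle.} The genuine difficulty is entirely in the invertibility, not the AONT property: I must pin down one permutation $P$ for which $J - P$ is nonsingular over $\eff_2$ whenever $s$ is odd. The cleanest route is probably to avoid circulants and instead give an explicit small perturbation of $J-I$ that fixes the parity problem---e.g.\ move one zero off the diagonal so that exactly one row and one column acquire an extra one, breaking the all-even row-sum obstruction that kills $J-I$. I would verify invertibility either by a direct determinant/rank argument (row-reducing $J-P$ using the structure that all nonzero rows are ``all ones except one position'') or by exhibiting the inverse explicitly, analogous to the relation $M^{-1}=M$ in the even case. I expect a short rank computation to settle it: subtracting rows of $J-P$ pairwise leaves rows of weight $2$ that are easily shown independent for odd $s$, with the single parity check working out precisely because $s$ is odd.
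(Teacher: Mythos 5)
Your step (1) is correct: by Lemma \ref{linearAsymAONT} with $t_i=1$, $t_o=2$, the task reduces to an invertible matrix over $\eff_2$ in which every column has at most one zero. But the framework you build on this in steps (2)--(3) is provably a dead end: for odd $s$, \emph{every} matrix $M=J-P$ with $P$ a permutation matrix is singular over $\eff_2$, no matter how $P$ is chosen. Each column of such an $M$ contains exactly $s-1$ ones, which is even when $s$ is odd, so the sum of all rows is the zero vector (equivalently, $\mathbf{1}M=(s+1)\mathbf{1}=\mathbf{0}$ over $\eff_2$). You noticed this parity obstruction for $P=I$, but it applies verbatim to all permutations, including your circulant candidate: for $M=J-C$ the symbol is $g(z)=\sum_{i\neq 1}z^i$, and $g(1)=s-1\equiv 0 \pmod 2$, so $z+1$ divides $\gcd(g(z),z^s+1)$ --- the gcd you claimed equals $1$ is never $1$ for odd $s$ (e.g., $s=3$ gives rows summing to zero). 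The same argument shows more: any matrix with exactly one zero in every column is singular for odd $s$, so a valid $M$ must have at most $s-1$ zeros, i.e., at least one all-ones column. This is exactly what the paper does: it places the $s-1$ zeros on the first subdiagonal (so the last column is all ones) and verifies invertibility by exhibiting the inverse explicitly.

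Your closing hedge points in the right direction but is not carried out, and one of its natural readings fails. Deleting the diagonal zero at position $(1,1)$ from $J-I$ (so row~1 and column~1 become all ones) does yield a valid matrix: with $M=J-I+E_{11}$ one has $R_1+R_i=e_i$ for $i\geq 2$ and then $R_1+\sum_{i\geq 2}e_i=e_1$, so $M$ is invertible (for every $s$, in fact), and each column has at most one zero. Had you verified this, you would have a correct proof, arguably simpler than the paper's subdiagonal construction. However, the other reading --- \emph{relocating} the zero within its column, say from $(1,1)$ to $(2,1)$ --- fails already at $s=3$, where $R_1+R_3=R_2$. Since your stated ``Key steps'' commit to finding a permutation $P$ with $J-P$ invertible for odd $s$, which is impossible, and the workable alternative is left as an unverified guess, the proposal as written has a genuine gap.
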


\begin{proof}
Suppose $s\geq 3$ is odd. Let $M$ be the $s$ by $s$ matrix whose first subdiagonal consists of $0$'s, but all other entries are $1$'s. For example, when $s=5$, we have	
		\[
	M = \left(\begin{array}{c c c c c}
	1 & 1 & 1 & 1 & 1\\
	0 & 1 & 1 & 1 & 1\\
	1 & 0 & 1 & 1 & 1\\
	1 & 1 & 0 & 1 & 1\\
	1 & 1 & 1 & 0 & 1
	\end{array}\right).\]
The matrix $M$ is invertible and its inverse is an $s$ by $s$ matrix with a right top submatrix that is an identity matrix, and $1$'s occur along the last row and first column. For example, when $s=5$, we have
	\[
	M^{-1} = \left(\begin{array}{c c c c c}
	1 & 1 & 0 & 0 & 0 \\
	1 & 0 & 1 & 0 & 0 \\
	1 & 0 & 0 & 1 & 0 \\
	1 & 0 & 0 & 0 & 1 \\
	1 & 1 & 1 & 1 & 1
	\end{array}\right).\]
Further, any $2$ by $1$ submatrix of $M$ has rank $1$ because there is at most one occurrence of $0$ in each column of $M$. 
\end{proof}

Recall that $\mathbf{y} =  \mathbf{x} M^{-1}$ and  $\mathbf{x} = \mathbf{y}M$.	
Given $s$ inputs $x_1, \dots , x_s \in \zed_2$, the above-discussed transform can be computed as follows:
\[
\begin{array}{l}
y_1 = \displaystyle\sum_{i = 1}^s x_i.\Botstrut{4}\\
y_i = x_{i-1} + x_{s}, \text{ for } 2\le i \le s.
\end{array}
\]
 This yields the $s$ outputs $y_1, \dots , y_s$. The inverse transform  is computed as
	\[
\begin{array}{l}
x_s = \displaystyle\sum_{i = 1}^s y_i \Botstrut{4} \\ 
x_i = x_s + y_{i+1}, \text{ for } 2\le i \le s. 
\end{array}
\]
Thus, computation of the transform or its inverse requires $2s-2$ addition operations in $\zed_2$
(i.e., exclusive-ors). 
%Hence, this construction maintains both the security and the efficiency of bastion AONT for the odd values of $s$.

\begin{Theorem}
Suppose $q$ is a prime power and $1 \leq t_o \leq s$. 
Then there is a linear $(1,t_o,s,q)$-AONT unless $q=2$ and $t_0 = 1$.  Further, there does not exist
any $(1,1,s,2)$-AONT.
\end{Theorem}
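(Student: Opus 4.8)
The statement has two parts, so I would prove them separately. The second part (nonexistence of a $(1,1,s,2)$-AONT for the relevant range, here specifically $s \geq 2$) is the genuine nonexistence claim, while the first part is really a collection of existence results that I can assemble from lemmas already proved in the excerpt. Let me begin with the positive part, since the negative part sits naturally as the single excluded case.

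\textbf{Existence of linear $(1,t_o,s,q)$-AONTs (the positive part).} The plan is to build a $(1,t_o,s,q)$-AONT for each admissible $(q,t_o)$ and then propagate it upward in $t_o$ using Lemma~\ref{Lem:Asym_L_to}. By that lemma it suffices to produce, for each allowed $q$, a linear $(1,t_o,s,q)$-AONT with $t_o$ as small as possible, since existence for the minimal $t_o$ forces existence for all larger $t_o \le s$. I would split on $q$ as follows. For $q \geq 3$ (equivalently $q > 2$), I claim a linear $(1,1,s,q)$-AONT exists; one clean way is to invoke the Cauchy-matrix construction discussed just before the theorem (a linear $(t,t,s,q)$-AONT exists whenever $q \geq 2s$), but to cover all $q \geq 3$ I would instead exhibit an explicit invertible matrix $M$ over $\eff_q$ with no zero entries, so that every $1$ by $1$ submatrix is nonzero, hence of rank $1$; by Lemma~\ref{linearAsymAONT} this gives a linear $(1,1,s,q)$-AONT, and then Lemma~\ref{Lem:Asym_L_to} yields $(1,t_o,s,q)$ for all $1 \le t_o \le s$. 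For $q = 2$ and $t_o \geq 2$, a $(1,1,s,2)$-AONT is unavailable (that is exactly the excluded case), so the minimal achievable inner-output pair is $t_o = 2$; here I would cite the already-established linear $(1,2,s,2)$-AONTs, namely Remark~\ref{bastion-linear} for even $s$ and Lemma~\ref{OddBastion} for odd $s$, covering all $s \geq 2$, and then push up with Lemma~\ref{Lem:Asym_L_to} to obtain $(1,t_o,s,2)$ for all $2 \le t_o \le s$. Combining these cases gives a linear $(1,t_o,s,q)$-AONT in every case except $q = 2, t_o = 1$.

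\textbf{Nonexistence of $(1,1,s,2)$-AONTs (the negative part).} This is the crux, and I expect it to be the main obstacle. By Lemma~\ref{linearAsymAONT} a \emph{linear} $(1,1,s,2)$-AONT would require an invertible $s$ by $s$ matrix $M$ over $\eff_2$ with every entry nonzero, i.e.\ the all-ones matrix, which is invertible over $\eff_2$ only when $s$ is odd (its rank is $1$ for $s \geq 2$ regardless, so in fact it is never invertible for $s \geq 2$); this kills the linear case immediately. But the theorem asserts that \emph{no} $(1,1,s,2)$-AONT exists at all, not merely no linear one, so I cannot rely on Lemma~\ref{linearAsymAONT}. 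Instead I would argue combinatorially from Definition~\ref{defunbiased}. A $(1,1,s,2)$-AONT is a symmetric $(t,t,s,v)$-AONT with $t=1$ and $v=2$, so the cleanest route is to quote the known nonexistence result: the excerpt already notes in Example~\ref{E232} that ``it was shown in \cite{St} that an $(1,1,4,2)$-AONT does not exist,'' and more generally a $(1,1,s,2)$-AONT requires, for each input column $i$ and each output column $j$, that the pair of columns $(x_i,y_j)$ be unbiased over $\eff_2$, meaning each of $x_i$ and $y_j$ individually is balanced and moreover each output is nonconstant in each input. The standard parity/counting obstruction is that balance of every $(x_i,y_j)$ pair together with bijectivity over $\eff_2^s$ forces a divisibility or parity condition that fails for $v=2$; I would reconstruct this by counting, over all $2^s$ rows, the number of rows in which $x_i = y_j$ and showing the unbiasedness demand $2^{s-2}$ for each of the four patterns cannot be met simultaneously with the global bijection constraint. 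The delicate point, and the step I would spend the most care on, is ruling out \emph{all} (possibly nonlinear) bijections rather than just linear ones; I would handle this either by a direct counting argument on the array or, most safely, by citing the established result of Stinson~\cite{St} that no $(1,1,s,2)$-AONT exists for $s \geq 2$, which already appears as background in the paper.
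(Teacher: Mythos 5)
Your overall architecture coincides with the paper's proof: the paper likewise splits on $q$, seeds at the smallest achievable $t_o$ (a linear $(1,1,s,q)$-AONT when $q>2$; the linear $(1,2,s,2)$-AONTs of Remark~\ref{bastion-linear} for even $s$ and Lemma~\ref{OddBastion} for odd $s$ when $q=2$), lifts to all larger $t_o$ via Lemma~\ref{Lem:Asym_L_to}, and settles the nonexistence of $(1,1,s,2)$-AONTs by citing \cite{St} --- which is also your ``safest'' stated option, so the negative part matches the paper exactly.

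The one substantive soft spot is your seed for $q \geq 3$. The paper simply invokes \cite[Corollary 2.3]{St} for the existence of a linear $(1,1,s,q)$-AONT for all $s \geq 1$, whereas you promise to ``exhibit an explicit invertible matrix over $\eff_q$ with no zero entries'' but never write one down, and this step is not automatic: the natural candidate with all off-diagonal entries $1$ and diagonal entries $\alpha$ has determinant $(\alpha-1)^{s-1}(\alpha-1+s)$, and over $\eff_3$ the only admissible choice $\alpha=2$ fails precisely when $s \equiv 2 \pmod 3$, so a uniform construction for all $q\geq 3$ and all $s$ needs a further idea (e.g., perturbing a single diagonal entry) or the citation the paper uses; your Cauchy-matrix fallback covers only $q \geq 2s$ and does not close this. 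A second, minor slip: the all-ones matrix over $\eff_2$ is \emph{not} ``invertible only when $s$ is odd'' --- it has rank $1$ for every $s \geq 2$ (it is $J-I$ that is invertible exactly when $s$ is even) --- though your parenthetical self-correction lands on the right conclusion, and in any case that aside only concerns the linear subcase of a claim that must (and, via \cite{St}, does) rule out all bijections. Your explicit restriction of the nonexistence claim to $s \geq 2$ is a fair reading, since the identity map is a $(1,1,1,2)$-AONT. So: same route as the paper, correct modulo the unexhibited matrix, which is most cleanly repaired by citing \cite[Corollary 2.3]{St} as the paper does.
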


\begin{proof}
When $q > 2$, it was shown in \cite[Corollary 2.3]{St} that there exists a linear $(1,1,s,q)$-AONT for all $s \geq 1$. Applying  Lemma \ref{Lem:Asym_L_to}, there exists a linear $(1,t_o,s,q)$-AONT for all prime powers $q > 2$ and all $t_0$ and $s$ such that $1 \leq t_o \leq  s$.

We have also noted in Remark \ref{bastion-linear} that there exists a linear $(1, 2, s, 2)$-AONT for all even values of $s \geq 2$. Applying  Lemma \ref{Lem:Asym_L_to}, there exists a linear $(1,t_o,s,2)$-AONT for all $t_0$ and $s$ such that $s$ is even and $2 \leq t_o \leq  s$. From Lemmas \ref{Lem:Asym_L_to} and \ref{OddBastion}, there exists a linear $(1,t_o,s,2)$-AONT for all $t_0$ and $s$ such that $s$ is odd and $2 \leq t_o \leq  s$.

Finally, it was shown in \cite{St} that there does not exist
any $(1,1,s,2)$-AONT.
\end{proof}

\subsection{Linear $(2,s-1,s,2)$-AONT}

In this section, we consider linear $(2,s-1,s,2)$-AONTs. 

For even values of $s \geq 4$, we use the $(1,2,s,2)$-AONT from Remark \ref{bastion-linear}.
This AONT  is based on the  $s$ by $s$ matrix $M$ with $0$'s on the diagonal and $1$'s elsewhere. We have already noted that this matrix is invertible. To show that it gives rise to a $(2,s-1,s,2)$-AONT, we need to show that any $s-1$ by $2$ submatrix has rank $2$.  It can be observed that any choice of $s-1$ rows and two columns will contain at least $s-3 \geq 1$ occurrences of the row $(1,1)$ and at least one copy of the row $(0,1)$ or $(1,0)$. Therefore, we have proven the following.

\begin{Lemma}
	\label{s-even.lem}
	For any even integer $s\ge 4$, there exists a linear $(2,s-1,s,2)$-AONT.
\end{Lemma}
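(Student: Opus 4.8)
The plan is to apply Lemma~\ref{linearAsymAONT} directly to the matrix singled out in Remark~\ref{bastion-linear}, namely the $s$ by $s$ matrix $M$ over $\eff_2$ with $0$'s on the diagonal and $1$'s in every off-diagonal position. For even $s$ this $M$ is invertible (as recorded in Remark~\ref{bastion-linear}), so $\mathbf{y} = \mathbf{x}M^{-1}$ is a well-defined bijection; by Lemma~\ref{linearAsymAONT} with $t_i = 2$ and $t_o = s-1$, it then suffices to show that every $(s-1)$ by $2$ submatrix of $M$ has rank $2$ over $\eff_2$.

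First I would record the two-column structure of $M$. Fix two distinct columns $j_1, j_2$ and inspect the entries of $M$ in these columns. A row $i$ with $i \notin \{j_1, j_2\}$ reads $(1,1)$; row $j_1$ reads $(0,1)$; and row $j_2$ reads $(1,0)$. Thus, across all $s$ rows, the column pair $(j_1,j_2)$ produces the row $(1,1)$ exactly $s-2$ times, together with a single $(0,1)$ and a single $(1,0)$.

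Next I would pass to an arbitrary $(s-1)$ by $2$ submatrix, obtained by deleting exactly one row $r$. Deleting $r$ removes at most one of the three row-types above, so the surviving submatrix still contains at least $s-3$ copies of $(1,1)$ and at least one of $(0,1)$ or $(1,0)$. Since $s \geq 4$ we have $s-3 \geq 1$, so a $(1,1)$ row always survives. Over $\eff_2$, any two distinct nonzero vectors are linearly independent, and $(1,1)$ paired with either $(0,1)$ or $(1,0)$ spans $\eff_2^2$; hence the submatrix has rank $2$. As $j_1, j_2$ and $r$ were arbitrary, every $(s-1)$ by $2$ submatrix has rank $2$, and Lemma~\ref{linearAsymAONT} delivers the claimed $(2,s-1,s,2)$-AONT.

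There is no genuine obstacle here beyond bookkeeping; the only point requiring care is the hypothesis $s \geq 4$, which is exactly what guarantees that a $(1,1)$ row remains after one row has been deleted, so that a rank-$2$ pair is always present. I would also be careful to invoke Remark~\ref{bastion-linear} for the invertibility of $M$ when $s$ is even, since that is what makes $M^{-1}$ — and hence the transform itself — well defined.
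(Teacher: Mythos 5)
Your proposal is correct and follows essentially the same route as the paper: both use the matrix $M$ from Remark~\ref{bastion-linear} (zeros on the diagonal, ones elsewhere), note its invertibility for even $s$, and observe that any $s-1$ rows of any two columns retain at least $s-3 \geq 1$ copies of $(1,1)$ together with at least one of $(0,1)$ or $(1,0)$, forcing rank $2$. Your version merely spells out the row-type count and the role of the hypothesis $s \geq 4$ more explicitly than the paper does.
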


%We note that D'Arco et al.\ \cite{DES} showed that a  $(2,2,3,2)$-AONT does not exist.  
%Therefore, for odd $s$, we assume $s \geq 5$. 

Now we turn to odd values of $s$.

\begin{Lemma}
	\label{s-odd.lem}
	For any odd integer $s\ge 5$, there is a linear $(2,s-1,s,2)$-AONT.
\end{Lemma}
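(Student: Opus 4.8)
The plan is to reduce the statement to a clean combinatorial condition on a single matrix via Lemma \ref{linearAsymAONT}, and then exhibit an explicit matrix. Setting $t_i = 2$ and $t_o = s-1$, Lemma \ref{linearAsymAONT} tells us it suffices to produce an invertible $s$ by $s$ matrix $M$ over $\eff_2$ such that every $(s-1)$ by $2$ submatrix of $M$ has rank $2$. First I would reformulate this rank condition over $\eff_2$ in terms of the columns of $M$. An $(s-1)$ by $2$ submatrix is obtained by choosing two columns and deleting one row, say row $r$; over $\eff_2$ two vectors are dependent only if one is zero or they are equal. Since $M$ is invertible no column is zero, so the restricted column vanishes exactly when that column equals $e_r$ (Hamming weight $1$), and the two restricted columns coincide exactly when the original columns differ only in row $r$ (Hamming distance $1$). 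Hence the desired property is equivalent to: \emph{$M$ is invertible, every column has weight at least $2$, and every two columns are at Hamming distance at least $2$}.

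The key idea is to arrange all columns to have \emph{odd} weight. Over $\eff_2$ the Hamming distance between two vectors equals the weight of their sum, so two odd-weight vectors are always at even distance; being distinct (as columns of an invertible matrix), they are automatically at distance at least $2$, and odd weight at least $3$ gives weight at least $2$ for free. Thus I only need an invertible $M$ over $\eff_2$ all of whose columns have odd weight at least $3$. For odd $s \geq 5$ I would take the columns $c_j = \mathbf{1} + e_j + e_{j+1}$, with indices read modulo $s$ (so $e_{s+1} = e_1$); that is, column $j$ has $0$'s exactly in positions $j$ and $j+1$ and $1$'s elsewhere. Each column then has weight $s-2$, which is odd and at least $3$ precisely because $s$ is odd and $s \geq 5$, matching the threshold in the statement (and explaining why $s=3$, where a $(2,2,3,2)$-AONT cannot exist, is correctly excluded).

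The hard part will be proving that this $M$ is invertible over $\eff_2$. The entry $M_{ij}$ depends only on $(i-j) \bmod s$, so $M$ is a circulant matrix; its associated polynomial is $g(x) = x + x^2 + \cdots + x^{s-2}$, and a binary circulant is invertible iff $\gcd\!\big(g(x),\, x^s + 1\big) = 1$ in $\eff_2[x]$. To finish I would examine common roots in $\overline{\eff_2}$: a shared root $\alpha$ must satisfy $\alpha^s = 1$, and since $s$ is odd the factor $x$ of $g$ is irrelevant, so $\alpha \ne 0$ and $g(\alpha)/\alpha = 1 + x + \cdots + x^{s-3}$ evaluated at $\alpha$ must vanish, forcing $\alpha^{s-2} = 1$. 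Combined with $\alpha^s = 1$ this gives $\alpha^2 = 1$, and since $s$ is odd the only $s$-th root of unity with $\alpha^2 = 1$ is $\alpha = 1$. But $g(1)$ is a sum of $s-2$ ones, equal to $s \bmod 2 = 1 \ne 0$, so $\alpha = 1$ is not a root of $g$. Hence $g$ and $x^s+1$ share no root, the gcd is $1$, and $M$ is invertible, completing the construction.
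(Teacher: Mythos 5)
Your proof is correct, but it takes a genuinely different route from the paper's. The paper exhibits the bordered-identity matrix $B_s$ (ones on the main diagonal, last row and last column, zeros elsewhere), proves invertibility by subtracting rows $1,\dots,s-1$ from row $s$ to reach an upper triangular matrix, and then verifies the rank condition by direct casework: for each choice of two columns it counts how many rows of the patterns $(0,1)$, $(1,0)$, $(1,1)$ occur and checks that deleting one row cannot destroy all but one pattern. You instead first distill Lemma \ref{linearAsymAONT} into a clean reusable criterion over $\eff_2$ --- $M$ invertible, every column of Hamming weight at least $2$, and every pair of columns at Hamming distance at least $2$ --- and this reformulation is sound: over $\eff_2$ an $(s-1)\times 2$ submatrix is singular exactly when a restricted column vanishes (original column equals some $e_r$) or the two restricted columns coincide (original columns at distance $1$), and invertibility rules out zero or repeated columns. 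Your parity trick (odd-weight columns are pairwise at even, hence $\geq 2$, distance) then reduces everything to finding an invertible matrix with odd column weights $\geq 3$, and the circulant with columns $\mathbf{1}+e_j+e_{j+1}$ works; your gcd computation over $\eff_2[x]$ is correct, with $s$ odd used exactly where needed ($\alpha^2=1$ forcing $\alpha=1$, and $g(1)=s-2\equiv 1 \pmod 2$). What each approach buys: the paper's argument is entirely elementary and self-contained, needing no circulant or polynomial machinery; your weight/distance criterion is more illuminating --- it explains at a glance why the paper's $B_s$ works (columns $e_j+e_s$ and $\mathbf{1}$ have weight $\geq 2$ and pairwise distance $\geq 2$), why the all-ones-minus-identity matrix handles even $s$ in Lemma \ref{s-even.lem}, and why $s=3$ must fail (columns would have weight $1$), and it is a tool one could reuse for other binary $(2,s-1,s,2)$ constructions.
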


\begin{proof}
For an odd integer $s \geq 5$, define the $s$ by $s$ matrix $B_s$ to have $1$'s in the entries on the main diagonal, the last row and the last column, and $0$'s elsewhere. 

For example, the matrix $B_5$ is as follows:
\[ \left(\begin{array}{c c c c c }
1 & 0 & 0 & 0 & 1 \\
0 & 1 & 0 & 0 & 1 \\
0 & 0 & 1 & 0 & 1 \\
0 & 0 & 0 & 1 & 1 \\
1 & 1 & 1 & 1 & 1 \end{array}
\right).
\]

	Suppose we subtract rows $1, \dots , s-1$ of $B_s$ from row $s$.
	Then we obtain an upper triangular matrix with $1$'s on the main diagonal.  This  proves that $B_s$ is invertible.
	
	Now we prove that any $s-1$ by $2$ submatrix has rank two. First, consider columns $i$ and $s$, where $1 \leq i \leq s-1$. The $s$ rows of this submatrix contain two copies of $(1,1)$ and $s-2$ copies of $(0,1)$. Therefore, any $s-1$ rows still contain at least one copy of $(1,1)$ and at least one copy of $(0,1)$.
	This means that the $s-1$ by $2$ submatrix has rank $2$.
	
Next, we consider columns $i$ and $j$, where $1 \leq i < j \leq s-1$. 	The $s$ rows of this submatrix contain one copy of each of $(0,1)$, $(1,0)$ and $(1,1)$. Therefore, any $s-1$ rows still contain 
at least two of the three pairs $(0,1)$, $(1,0)$ and $(1,1)$.
	This means that the $s-1$ by $2$ submatrix has rank $2$.
	\end{proof}

\begin{Theorem}
	There is a linear  $(2,s-1,s,2)$-AONT if and only if $s\geq 4$.
\end{Theorem}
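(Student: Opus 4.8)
The plan is to prove the two implications separately. For the sufficiency direction, observe that essentially all the work has already been done: Lemma~\ref{s-even.lem} supplies a linear $(2,s-1,s,2)$-AONT for every even $s \geq 4$, and Lemma~\ref{s-odd.lem} supplies one for every odd $s \geq 5$. Since every integer $s \geq 4$ is either even and at least $4$ or odd and at least $5$, these two lemmas together establish existence for all $s \geq 4$, so this half of the statement is immediate.

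The substance therefore lies in the necessity direction, where I must show that no linear $(2,s-1,s,2)$-AONT exists for $s \leq 3$. The cases $s = 1$ and $s = 2$ are ruled out on purely parametric grounds: a $(t_i,t_o,s,v)$-AONT requires $t_i \leq t_o$ (see the remark following Definition~\ref{def1}), but here $t_i = 2$ while $t_o = s-1 \in \{0,1\}$, so the parameters are inadmissible. This leaves the single genuine case $s = 3$, in which the object in question is precisely a linear $(2,2,3,2)$-AONT.

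To dispose of $s = 3$, I would invoke the nonexistence of $(2,2,s,2)$-AONTs for $s > 2$, proved by D'Arco et al.\ \cite{DES} and already quoted in Example~\ref{E232}; since that result excludes even general (not necessarily linear) $(2,2,3,2)$-AONTs, the linear case is excluded \emph{a fortiori}. If a self-contained argument is preferred, one can instead apply Lemma~\ref{linearAsymAONT} with $t_i = t_o = 2$: a linear $(2,2,3,2)$-AONT would be an invertible $3 \times 3$ matrix $M$ over $\eff_2$ every $2 \times 2$ submatrix of which has rank $2$. Over $\eff_2$ a $2 \times 2$ submatrix has rank $2$ exactly when its two rows are distinct and nonzero, so requiring this for every choice of two rows and two columns forces each row of $M$ to have weight at least $2$ and any two rows to differ in at least two positions. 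A short finite check then shows that the only way to select three such rows is to take $(1,1,0),(1,0,1),(0,1,1)$, whose sum is the zero vector, so $M$ is singular, a contradiction.

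Combining the two directions yields the claimed equivalence. I expect no real obstacle here: the genuine mathematical content resides in the two constructions of Lemmas~\ref{s-even.lem} and~\ref{s-odd.lem}, and the only point needing care is verifying that the boundary case $s = 3$ fails, which either the cited result or the finite check above settles cleanly.
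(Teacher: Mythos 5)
Your proposal is correct and follows essentially the same route as the paper: existence for $s \geq 4$ via Lemmas \ref{s-even.lem} and \ref{s-odd.lem}, the cases $s \leq 2$ excluded by the requirement $t_i \leq t_o$, and the boundary case $s = 3$ excluded by citing D'Arco et al.'s nonexistence result for $(2,2,3,2)$-AONTs, exactly as the paper does. Your optional self-contained check for $s=3$ (each row of $M$ must have weight at least $2$ and any two rows may agree in at most one position, forcing the rows $(1,1,0),(1,0,1),(0,1,1)$, which sum to zero) is also correct, but it merely replaces the citation rather than changing the argument's structure.
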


\begin{proof}
If a $(t_i,t_o,s,q)$-AONT exsits, we must have $t_i \leq t_o$. 
Hence, $s \geq 3$ if a $(2,s-1,s,2)$-AONT exists.
D'Arco et al.\ \cite{DES} proved that a linear $(2,2,3,2)$-AONT does not exist.  
For $s \geq 4$, Lemmas \ref{s-even.lem} and \ref{s-odd.lem} show that a linear $(2,s-1,s,2)$-AONT exists.
\end{proof}

\subsection{Linear $(t_i,t_o,s,q)$-AONT with $t_i \geq 2$}

In this section, we study linear $(t_i,t_o,s,q)$-AONTs with $t_i \geq 2$. We first prove a general upper bound on $s$ as a function of $t_i$, $t_o$ and $q$. Then we consider the case $t_i=2$ in detail.

%\subsection{Upper Bounds on $s$}

\begin{Theorem}
\label{T1}
Suppose there exists a linear $(t_i,t_o,s,q)$-AONT with $2\leq t_i \leq t_o$. Then the following bound holds:
	\[ s \leq \frac{(t_o -1)(q^{t_i} - 1)}{(t_i -1)(q-1)}.\]
\end{Theorem}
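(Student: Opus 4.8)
By Lemma~\ref{linearAsymAONT}, a linear $(t_i,t_o,s,q)$-AONT corresponds to an invertible $s\times s$ matrix $M$ over $\eff_q$ in which every $t_o\times t_i$ submatrix has rank $t_i$. The plan is to translate this rank condition into a statement about the columns of $M$, viewed as vectors in $\eff_q^s$, and then to extract a counting bound. The key reformulation I would use is this: the condition ``every $t_o\times t_i$ submatrix has rank $t_i$'' says that for any $t_i$ columns of $M$, you cannot find $t_o$ rows on which those columns become linearly dependent. Equivalently, looking at the $t_i$ chosen columns as an $s\times t_i$ matrix, any $t_o$ of its rows must have full column rank $t_i$. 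Contrapositively, the set of rows lying in any fixed $(t_i-1)$-dimensional subspace of $\eff_q^{t_i}$ (i.e.\ any hyperplane through the origin in the row space) can contain at most $t_o-1$ rows, since $t_o$ rows all in a common hyperplane would give a rank-deficient $t_o\times t_i$ submatrix.

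**The main counting argument.**

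This is where I expect the real work to be. Fix attention on a single column, or better, exploit the bound across all $t_i$-subsets simultaneously. The cleaner route is to pick one distinguished column, say column $1$, and study the $s$ rows restricted to that column together with a varying second set of coordinates; but the shape of the bound, with $q^{t_i}-1$ in the numerator and $q-1$ in the denominator, strongly suggests projectivization. Note that $(q^{t_i}-1)/(q-1)$ is exactly the number of points in the projective space $\mathrm{PG}(t_i-1,q)$, and $(q^{t_o-1}... )$ — more precisely, I would argue as follows. For each choice of $t_i$ columns, the $s$ rows give $s$ vectors in $\eff_q^{t_i}$, each necessarily nonzero (a zero row among these columns, together with any $t_o-1$ others, would still need rank $t_i$, forcing nonzero rows). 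Projectivize these rows to points of $\mathrm{PG}(t_i-1,q)$. The rank condition forbids $t_o$ rows from lying in a common hyperplane, hence at most $t_o-1$ rows may map to any single hyperplane of $\mathrm{PG}(t_i-1,q)$. Counting incidences between the $s$ points and the hyperplanes of $\mathrm{PG}(t_i-1,q)$ then yields the bound: each point lies on $(q^{t_i-1}-1)/(q-1)$ hyperplanes, each hyperplane carries at most $t_o-1$ points, and there are $(q^{t_i}-1)/(q-1)$ hyperplanes total.

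**Assembling the inequality.**

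Carrying out the incidence count, the number of (point, hyperplane) incidences is on one hand $s\cdot\frac{q^{t_i-1}-1}{q-1}$ and on the other hand at most $(t_o-1)\cdot\frac{q^{t_i}-1}{q-1}$. Rearranging gives
\[
s \leq (t_o-1)\,\frac{q^{t_i}-1}{q^{t_i-1}-1}.
\]
This is close but not quite the stated bound; the denominator in the target is $(t_i-1)(q-1)$ rather than $q^{t_i-1}-1$. I would therefore either refine the hyperplane-capacity estimate or, more likely, replace the hyperplane count with a count over \emph{lines} (the $1$-dimensional subspaces / points) and their complementary flats, since $(t_i-1)(q-1)$ suggests counting the $q-1$ nonzero scalar multiples within each of $t_i-1$ independent directions. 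The cleanest version is probably to bound the number of rows lying in each of the $\frac{q^{t_i}-1}{q-1}$ lines through the origin and argue that the $t_o-1$ capacity distributes so that each line absorbs at most $t_o-1$ rows while each row's coordinate vector forces it into a configuration counted with multiplicity $(t_i-1)(q-1)$. The delicate point — and the step I expect to be the main obstacle — is getting the combinatorial factor exactly right so that the denominator comes out as $(t_i-1)(q-1)$; this hinges on the precise dual object (hyperplanes versus points versus flags) chosen for the incidence count, and on verifying that the rows restricted to the $t_i$ columns are not merely nonzero but sufficiently spread that no line is overloaded.
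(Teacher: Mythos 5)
Your main counting argument is not merely ``close'' to the stated bound; it is correct and in fact proves something \emph{stronger}, and the only part of your writeup that fails is the closing paragraph where you abandon it to chase the constant $(t_i-1)(q-1)$ via lines and flags. Your hyperplane incidence count gives $s\cdot\frac{q^{t_i-1}-1}{q-1}\le (t_o-1)\cdot\frac{q^{t_i}-1}{q-1}$, i.e.\ $s\le (t_o-1)\frac{q^{t_i}-1}{q^{t_i-1}-1}$; since $q^{t_i-1}-1=(q-1)(q^{t_i-2}+\cdots+q+1)\ge (q-1)(t_i-1)$ (a sum of $t_i-1$ terms, each at least $1$), this immediately implies $s\le \frac{(t_o-1)(q^{t_i}-1)}{(t_i-1)(q-1)}$, with the two bounds coinciding exactly when $t_i=2$. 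You compared the denominators in the wrong direction: for $t_i\ge 3$ your bound is sharper, not weaker, than the theorem's. So the correct fix is to delete the final paragraph and append this one-line comparison; the sketch about each row being ``counted with multiplicity $(t_i-1)(q-1)$'' is not a coherent argument and is not needed.

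One genuine error does need repair: your parenthetical claim that the rows of the $s\times t_i$ submatrix are necessarily nonzero is false when $t_i<t_o$, since a zero row together with any $t_o-1$ others can still yield rank $t_i$ (as $t_o-1\ge t_i$ is possible); indeed, columns $1$ and $2$ of the $(2,3,4,2)$-AONT matrix in Example~\ref{E232} contain the row $(0,0)$. This does not break your count --- a zero row lies on \emph{all} $\frac{q^{t_i}-1}{q-1}$ hyperplanes, so it only increases the incidence total on the left side --- but the justification as written is wrong; alternatively you can use the paper's device of replacing each zero row by an arbitrary nonzero row, which cannot decrease the rank of any $t_o\times t_i$ submatrix. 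For comparison, the paper's proof of Theorem~\ref{T1} is genuinely different and weaker: it partitions the (WLOG nonzero) rows into scalar-multiple equivalence classes --- projective \emph{points} rather than hyperplanes --- observes that $t_o$ rows cannot come from fewer than $t_i$ classes, so the $t_i-1$ largest class sizes sum to at most $t_o-1$, and finishes by averaging. That argument uses only the consequence that $t_o$ rows cannot be concentrated on $t_i-1$ projective points, whereas your incidence count exploits the full condition that no $t_o$ rows lie in a common $(t_i-1)$-dimensional subspace; this is precisely why your route yields the strictly better bound $s\le(t_o-1)\frac{q^{t_i}-1}{q^{t_i-1}-1}$ when $t_i\ge 3$.
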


\begin{proof}
Fix any $t_i$ columns of the matrix $M$ and consider the resulting submatrix $M'$. Recall that any $t_o$ by $t_i$ submatrix of $M$ must have rank $t_i$. 

There are $q^{t_i}$ possible $t_i$-tuples for any given row of $M'$. We can replace an all-zero $t_i$-tuple with any other $t_i$-tuple, and it does not decrease the rank of any $t_o$ by $t_i$ submatrix in $M'$. Hence, we can assume that there is no all-zero $t_i$-tuple among the rows of $M'$. Therefore, there are $q^{t_i}-1$ possible rows in $M'$. 

For any two nonzero $t_i$-tuples, say $a$ and $b$, define $a\sim b$ if there is a nonzero element $\alpha \in \eff_{q}$ such that $a = \alpha b$. Clearly $\sim$ is an equivalence relation, and there are $({q^{t_i}-1})/({q-1})$ equivalence classes, each of size $q-1$.

	Suppose the equivalence classes of rows are denoted by $\mathcal{E}_i$. Further, suppose there are $a_i$ rows from $\mathcal{E}_i$ in $M'$, for $1 \leq i \leq (q^{t_i}-1)/(q-1)$. The sum of the $a_i$'s is equal to $s$ and hence the average value of an $a_i$ is 
	\[ \overline{a} = \frac{s(q-1)}{  q^{t_i}-1}.\] 
	Let $L$ denote the sum of the $t_i -1$ largest $a_i$'s. It is clear that
	\[ L \geq (t_i - 1)\overline{a} = \frac{s(t_i - 1)(q-1)}{  q^{t_i}-1}.\]
	Also, because the $t_o$ rows of $M'$ cannot come from fewer than $t_i$ equivalence classes,
we have
	\[ L \leq  t_o - 1.\] Hence, combining the two inequalities, we see that 
	\[ s \leq \frac{(t_o -1)(q^{t_i} - 1)}{(t_i -1)(q-1)}.\]
\end{proof}

We now look at the case $t_i = 2$ in more detail. 

\begin{Theorem}
	\label{Thrm:2_to_AsymAONT_bound}
	Suppose there exists a linear $(2,t_o,s,q)$-AONT with $2 \leq t_o$. Then the following bound holds:
	\[s \leq  \max \{ 1 + (t_o - 2)(q+1), 2 + (t_o - 1)(q-1)\}.\]
\end{Theorem}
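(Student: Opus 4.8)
The plan is to invoke Lemma \ref{linearAsymAONT} to replace the AONT condition by the purely linear-algebraic statement that $M$ is an invertible $s \times s$ matrix over $\eff_q$ in which every $t_o \times 2$ submatrix has rank $2$. Fixing any two columns and reading the $s$ rows of the resulting $s \times 2$ matrix as points of $\eff_q^2$, the rank-$2$ requirement says exactly that no $1$-dimensional subspace (line through the origin) contains $t_o$ or more of these rows, where an all-zero row is regarded as lying on every line. Since $\eff_q^2$ has $q+1$ such lines, this yields a local counting bound for each pair of columns, which I will sharpen beyond the averaging bound of Theorem \ref{T1}.

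First I would record the basic local inequality: if a chosen pair of columns produces $z$ rows that are zero in both coordinates, then each of the $q+1$ lines carries at most $t_o-1-z$ nonzero rows, so $s - z \le (q+1)(t_o-1-z)$ and hence
\[ s \le (q+1)(t_o-1) - qz. \]
In particular $z \le t_o-1$ always, and any single double-zero row already forces $s \le (q+1)(t_o-1) - q$, which is exactly the first quantity $1+(t_o-2)(q+1)$ in the stated maximum.

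The argument then splits into two cases according to whether the matrix contains a row with two or more zero entries. If some row has zeros in columns $j$ and $k$, then that row is an all-zero row of the $(j,k)$-projection, so the local inequality with $z \ge 1$ gives $s \le 1+(t_o-2)(q+1)$. In the complementary case every row has at most one zero entry, so counting zeros by rows and by columns gives $\sum_c z_c = \sum_r w_r \le s$, where $z_c$ and $w_r$ count zeros in column $c$ and row $r$ respectively. An averaging step then shows that for $s \ge 3$ at least two columns, say $j$ and $k$, have at most one zero each (otherwise $s-1$ columns would each contribute at least two zeros, forcing $s \le 2$). For this clean pair there are no double zeros, and in the $(j,k)$-projection the two coordinate directions ``column $j$ is zero'' and ``column $k$ is zero'' carry only $z_j \le 1$ and $z_k \le 1$ rows, while the remaining $q-1$ lines carry at most $t_o-1$ rows each; summing gives $s \le z_j+z_k+(q-1)(t_o-1) \le 2+(t_o-1)(q-1)$, the second quantity in the maximum. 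The cases $s \le 2$ are immediate since the claimed bound is at least $2$.

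I expect the main obstacle to be this second case: bounding all $q+1$ directions by $t_o-1$ only reproduces Theorem \ref{T1}, and the improvement hinges on two refinements, namely that a double-zero row imposes the penalty $-qz$, and that, absent double zeros, one can locate a pair of columns whose two coordinate directions are almost empty, so that two of the $q+1$ directions contribute at most $1$ row rather than $t_o-1$. Arranging the averaging step to select such a pair, and verifying that the two case bounds assemble into the single maximum, is the crux; the remainder is bookkeeping.
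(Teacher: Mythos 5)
Your proposal is correct and follows essentially the same route as the paper's proof: the same case split on whether some row has two zeros, the same counting of rows over the $q+1$ equivalence classes (lines through the origin in $\eff_q^2$), and the same pigeonhole selection of a pair of columns with at most two zeros between them in the second case. The only cosmetic differences are your projective-line phrasing of the equivalence classes of Theorem \ref{T1} and your selecting two columns with at most one zero \emph{each} rather than the paper's two columns with at most two zeros \emph{in total}, which yields the identical bound $s \leq 2 + (t_o-1)(q-1)$.
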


\begin{proof}
	Consider an $s$ by $2$ submatrix $M'$ and let $a_0$ be the number of $(0,0)$ rows in this submatrix.
	We divide the proof into two cases. 
	
	\begin{description}
		
		\item[case (1)] \mbox{\quad} \vspace{.1in}\\
		Suppose $a_0 \geq 1$. We claim that $M'$ contains at most $t_o - a_0 - 1$ rows from any one equivalence class $\mathcal{E}_i$, where equivalence classes are as defined in the proof of Theorem \ref{T1}.
		This follows because $t_o - a_0$ rows from one equivalence class, together with the $a_0$ rows of $0$'s, would result in $M'$ having rank 1. 
		Excluding the rows of $0$'s,  there are $q+1$ possible equivalence classes of rows, 
		so
		\[ s \leq a_0  + (t_o - a_0 - 1 )(q+1) \leq 1 + (t_o - 2)(q+1).\]
		
		\item[case (2)] \mbox{\quad} \vspace{.1in}\\
		If we are not in case (1), then  $a_0 = 0$ for \emph{every} $s$ by $2$  submatrix $M'$. There can be at most one $0$ in each row of $M$, so there are at most $s$ occurrences of $0$ in $M$. Therefore, there must be two columns in $M$ that contain a total of at most two $0$'s. We focus on this particular $s$ by $2$ submatrix $M'$. 
		
		Let the number of $0$'s in $M'$ be denoted by $a$; we have noted that $a  \leq 2$. 
		In the  $s - a$ rows that do not contain a $0$, there are at most $t_o - 1$ rows from any equivalence class $\mathcal{E}_i$. Note that we have excluded two $\mathcal{E}_i$'s, i.e., $(*,0)$ and $(0,*)$,  so  
		\[ s \leq a  + (t_o  - 1 )(q- 1) \leq 2 + (t_o - 1)(q-1).\]
	\end{description}
	
	Since one of the above two cases must hold, we have
	\[s \leq  \max \{ 1 + (t_o - 2)(q+1), 2 + (t_o - 1)(q-1)\}.\]
\end{proof}

We note that
\[ 1 + (t_o - 2)(q+1) < (t_o -1)(q+1)\]
and 
\[ 2 + (t_o - 1)(q-1) < (t_o - 1)(q+1),\]
so 
\[ \max \{ 1 + (t_o - 2)(q+1), 2 + (t_o - 1)(q-1)\} < (t_o - 1)(q+1).\]
Hence the bound from Theorem \ref{Thrm:2_to_AsymAONT_bound} improves  Theorem \ref{T1} when $t_i = 2$.

	For positive integers $t_i$ and $t_o$, where $1 \leq t_i \leq t_o$, and a prime power $q$, define \[S(t_i,t_o,q)= \max \{s: \text{a linear } (t_i,t_o,s,q)\text{-AONT exists}\}.\] 
Note that $S(t_i,t_o,q) \geq t_o$ because the $t_o$ by $t_o$ identity matrix is a $(t_i,t_o,t_o,q)$-AONT.

\begin{Theorem}
 Suppose $1 \leq t_i \leq t_o$ and $q$ is a prime power. Then there exists a $(t_i,t_o,s,q)$-AONT for
 $t_o \leq s \leq S(t_i,t_o,q)$.
\end{Theorem}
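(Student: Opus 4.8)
The plan is to prove the statement by descending from the extreme case $s = S(t_i,t_o,q)$ down to $s = t_o$, removing one input and one output at each step. The only tool needed is Lemma \ref{Lem:AsymCofactor}, which says that whenever $t_i \le t_o < s$, a linear $(t_i,t_o,s,q)$-AONT yields a linear $(t_i,t_o,s-1,q)$-AONT by passing to a suitable $(s-1)\times(s-1)$ cofactor submatrix of the defining matrix $M$. This single-step descent is exactly what lets us fill in every intermediate value of $s$.

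Concretely, I would set $S = S(t_i,t_o,q)$ and first note that, by the very definition of $S$ as the largest admissible parameter, a linear $(t_i,t_o,S,q)$-AONT exists; moreover $S \ge t_o$, as observed immediately before the theorem (the $t_o \times t_o$ identity matrix is a linear $(t_i,t_o,t_o,q)$-AONT), so the interval $t_o \le s \le S$ is nonempty. I would then run a downward induction on $s$. The base case $s = S$ is the AONT just produced. For the inductive step, assume $t_o \le s < S$ and that a linear $(t_i,t_o,s+1,q)$-AONT exists. Since $s \ge t_o$ we have $1 \le t_i \le t_o < s+1$, so the hypotheses of Lemma \ref{Lem:AsymCofactor} hold with its parameter ``$s$'' instantiated as $s+1$; applying it yields a linear $(t_i,t_o,s,q)$-AONT. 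Iterating down to $s = t_o$ establishes existence for every $s$ in the claimed range.

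I do not expect a genuine obstacle: the entire argument is just a repeated application of the already-proved cofactor lemma, which preserves invertibility of $M$ and the property that every $t_o \times t_i$ submatrix has rank $t_i$. The one point meriting attention is the boundary of the induction, namely verifying that the strict inequality $t_o < s+1$ required by Lemma \ref{Lem:AsymCofactor} remains valid at each step; this holds precisely because the induction never descends below $s = t_o$. (If one prefers to allow $S(t_i,t_o,q) = \infty$, the same descent applies verbatim: for any target $s \ge t_o$ one starts from a known AONT at some larger parameter and deletes rows and columns until the parameter reaches $s$.) Every AONT produced this way is linear, so in particular the asserted $(t_i,t_o,s,q)$-AONTs exist throughout $t_o \le s \le S(t_i,t_o,q)$.
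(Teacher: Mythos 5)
Your proposal is correct and matches the paper's argument exactly: the paper proves this theorem in one line as ``an immediate consequence of Lemma \ref{Lem:AsymCofactor},'' which is precisely the repeated cofactor-descent you spell out. Your explicit downward induction, the verification that $t_o < s+1$ holds at every step, and the remark about the identity matrix securing $S(t_i,t_o,q) \geq t_o$ are just the details the paper leaves implicit.
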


\begin{proof}
This is an immediate consequence of Lemma \ref{Lem:AsymCofactor}.
\end{proof}

We mainly consider cases where $2 \leq t_i < t_o$. However, before proceeding, we recall some previous results concerning the special case $t_i = t_o = 2$. Theorems \ref{T1} and \ref{Thrm:2_to_AsymAONT_bound} both assert that $S(2,2,q) \leq q+1$. 
However, the stronger result that $S(2,2,q) \leq q$ was previously shown in \cite[Theorem 14]{EGS}.
There are also some known lower bounds on $S(2,2,q)$, which are recorded in the following theorem.

\begin{Theorem}
Suppose $q$ is a prime power. Then the following bounds hold.
\begin{enumerate}
\item $\lfloor q/2 \rfloor \leq S(2,2,q) \leq q$.
\item $q-1 \leq S(2,2,q) \leq q$ if $q = 2^n -1$ is prime, for some integer $n$.
\item $S(2,2,q) = q$ if $q$ is prime. 
\end{enumerate}
\end{Theorem}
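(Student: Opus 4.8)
My plan is to split the statement into its upper bound $S(2,2,q)\le q$, which I would simply quote, and the three lower bounds, each of which amounts to constructing an invertible $s$ by $s$ matrix over $\eff_q$ all of whose $2$ by $2$ submatrices are invertible. By Lemma \ref{linearAsymAONT} with $t_i=t_o=2$ such a matrix is precisely the matrix $M$ of a linear $(2,2,s,q)$-AONT, so in every case the task is to exhibit one for the relevant value of $s$. The upper bound $S(2,2,q)\le q$ I would take directly from \cite[Theorem 14]{EGS}, as the surrounding discussion already indicates.

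For $\lfloor q/2\rfloor \le S(2,2,q)$ I would invoke the Cauchy-matrix construction recorded earlier in this section. Taking $t_i=t_o=2$ and $s=\lfloor q/2\rfloor$, the hypothesis $q\ge 2s$ holds, so a linear $(2,2,\lfloor q/2\rfloor,q)$-AONT exists; this is the $t_i=t_o$ specialization of \cite[Theorem 2]{DES}. Concretely, choosing $2\lfloor q/2\rfloor$ distinct elements $x_1,\dots,x_s,y_1,\dots,y_s$ and setting $M_{ij}=(x_i-y_j)^{-1}$ produces an invertible matrix every square submatrix of which is again a Cauchy matrix, hence invertible; in particular every $2$ by $2$ submatrix is invertible.

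The substance of the theorem lies in the two sharper lower bounds, $q-1\le S(2,2,q)$ for Mersenne primes $q=2^n-1$ and the exact value $S(2,2,q)=q$ for prime $q$; here I would build explicit matrices. The essential difficulty is that the Cauchy/MDS route caps out near $q/2$: requiring that \emph{all} square submatrices be invertible forces $2s\le q+1$, so to push $s$ up to $q$ one must use matrices that are not superregular, keeping only the $2$ by $2$ minors and the full determinant nonzero while permitting intermediate minors to vanish. A natural starting point is a matrix whose $2$ by $2$ minors are automatically nonzero but which is badly rank-deficient, for instance $B_{ij}=i+j$ over $\zed_q$ for prime $q$: its $2$ by $2$ minor on rows $i,i'$ and columns $j,j'$ equals $(i-i')(j'-j)$, nonzero whenever $i\ne i'$ and $j\ne j'$, yet $\mathsf{rank}(B)=2$. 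The plan is then to perturb $B$ along its diagonal, replacing each $B_{ii}$ by a carefully chosen value $d_i$, so as to reach full rank without destroying any off-diagonal minor; the determinant of the perturbed matrix is controllable by the matrix-determinant lemma, since it is a rank-two update of a diagonal matrix, and the prime arithmetic of $\zed_q$ (where $x\mapsto 2x$ is a bijection, so a single constant diagonal is impossible but a varying one is available) is exactly what lets the $d_i$ be chosen consistently when $s=q$. For the Mersenne case I would give the analogous but more constrained construction yielding $s=q-1$; I note that, since a Mersenne prime is itself prime, this bound is formally subsumed by the exact result for prime $q$, and would be presented only because it is obtained by an independent and simpler construction.

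The step I expect to be the main obstacle is certifying \emph{invertibility} of the $q$ by $q$ (respectively $(q-1)$ by $(q-1)$) matrix while simultaneously guaranteeing that all $\binom{s}{2}^2$ of its $2$ by $2$ submatrices are nonsingular. These two demands pull in opposite directions — low rank makes the minor conditions trivial, whereas enforcing full rank tends to create vanishing minors — and reconciling them for $s$ as large as $q$ is precisely where the special structure of a prime field must be used. Once a valid matrix is produced at the extremal $s$, existence for every smaller $s$ down to $t_o=2$ follows from Lemma \ref{Lem:AsymCofactor}, so the constructions above suffice to determine $S(2,2,q)$ throughout the claimed ranges.
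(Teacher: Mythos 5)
Your proposal is not a proof of parts 2 and 3; it is a research plan for reproving them. Note first that the paper itself establishes this theorem purely by citation: parts 1 and 2 are quoted from \cite{EGS} and part 3 from \cite{WCJ}. Your treatment of the upper bound (citing \cite[Theorem 14]{EGS}) and of the lower bound in part 1 via the Cauchy construction of \cite[Theorem 2]{DES} (take $s = \lfloor q/2 \rfloor$ so that $q \geq 2s$, giving a matrix all of whose $2 \times 2$ submatrices are invertible) is correct and complete, and it matches how those bounds actually arise. Your observation that part 2 is formally subsumed by part 3 is also correct.

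The gap is in parts 2 and 3, which you yourself identify as ``the substance of the theorem.'' Your perturbation sketch --- start from $B_{ij} = i+j$ over $\zed_q$, whose off-diagonal $2 \times 2$ minors equal $(i-i')(j'-j) \neq 0$, then replace the diagonal entries by values $d_i$ --- leaves every hard step unexecuted. First, the $d_i$ are never specified, and no argument is given that a consistent choice exists. Second, ``without destroying any off-diagonal minor'' does not suffice: every $2 \times 2$ submatrix on rows $\{i,i'\}$ and columns $\{i,j'\}$ (or $\{i,i'\}$) contains one or two perturbed diagonal entries, and there are on the order of $s^3$ such minors whose nonvanishing imposes genuine constraints on the $d_i$; these are not automatic and are not addressed. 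Third, invertibility of the full $q \times q$ matrix is asserted to be ``controllable by the matrix-determinant lemma'' but never controlled --- that lemma requires the diagonal part $\mathrm{diag}(d_i - 2i)$ to be invertible and still leaves a nontrivial $2 \times 2$ determinant condition to verify, again depending on the unspecified $d_i$. Reconciling all of these simultaneously for $s = q$ is precisely the content of the paper \cite{WCJ} (and, for $s = q-1$ with $q = 2^n - 1$ a Mersenne prime, of the construction in \cite{EGS}); each is a substantial result in its own right, not a routine verification. As written, your argument proves only part 1 and the upper bounds, and the closing appeal to Lemma \ref{Lem:AsymCofactor} is moot until an extremal matrix is actually exhibited.
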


\begin{proof}
1.\ and 2.\ are shown in \cite{EGS}, while 3.\ is proven in \cite{WCJ}.
\end{proof}

The cases when $t_i < t_o$ have not received previous study in the literature.
Theorems \ref{T1} and \ref{Thrm:2_to_AsymAONT_bound} provide upper bounds on $S(t_i,t_o,q)$. 
We evaluate some of these upper bounds for specific families of parameters in Table \ref{tab:ASYMAONT_bounds}.

\begin{table}
	\caption{Examples of bounds from Theorems \ref{T1} and \ref{Thrm:2_to_AsymAONT_bound}.}
	\label{tab:ASYMAONT_bounds}
	\[
	\begin{array}{c | c| c| c | c} 
		t_i & q & t_o  & \text{Upper bound on }S(t_i,t_o,q) & \text{Justification}\\ \hline \hline
		2 & 2 & 2 & t_o+1  & \text{Theorem \ref{Thrm:2_to_AsymAONT_bound}}\\
		2 & 2 & \geq 3 & 3t_o-5 & \text{Theorem \ref{Thrm:2_to_AsymAONT_bound}}\\		\hline
		2 & 3 & 2,3 & 2t_o & \text{Theorem \ref{Thrm:2_to_AsymAONT_bound}} \\
		2 & 3 & \geq 4 & 4t_o-7 & \text{Theorem \ref{Thrm:2_to_AsymAONT_bound}} \\\hline
		2 & 4 & 2,3 & 3t_o-1 & \text{Theorem \ref{Thrm:2_to_AsymAONT_bound}} \\
				2 & 4 & \geq 4 & 5t_o-9 & \text{Theorem \ref{Thrm:2_to_AsymAONT_bound}} \\\hline
		3 & 3 & \text{any} & \Topstrut{3}\Botstrut{1.5}\frac{13(t_o-1)}{2}  & \text{Theorem \ref{T1}}\\\hline
		3 & 4 & \text{any} & 20(t_o-1)  & \text{Theorem \ref{T1}}\\\hline
		3 & 5 & \text{any} & \Topstrut{3}\frac{121(t_o-1)}{2} & \text{Theorem \ref{T1}} 
	\end{array}
	\]
\end{table}

We can also obtain  lower bounds on $S(2,t_o,q)$, for specific choices of $t_o$ and $q$, from computer searches. %The computer search fixed the first row and column on the matrices. Hence the results are not necessarily the maximum values possible.
The results of our searches are presented in Examples \ref{A242} to \ref{A237}.  Table \ref{tab:ASYMAONT_CompRes} lists upper and lower bounds on $S(2,t_o,q)$, for some fixed values of $t_o,$ and $q$.  %It should be noted that as the alphabet size and $t_o$ increase, the search takes more time. Therefore, most searches were terminated before completion and the lower bounds obtained might not be best possible. 
There are four cases where we can report exact values of $S(2,t_o,q)$. When $(t_0,q) = (3,2)$ and $(3,3)$, we have found examples that meet the upper bounds from  Theorem \ref{Thrm:2_to_AsymAONT_bound}. For $(t_0,q) = (4,2)$ and $(5,2)$, the searches were run to completion and the exact values of $S(2,t_o,q)$ turn out to be strictly less than the bounds obtained from Theorem \ref{Thrm:2_to_AsymAONT_bound}, which are $S(2,4,2) \leq 7$ and $S(2,5,2) \leq 10$.
%, even with the given restriction on the first row and column. 
%These instances are distinguished by the notation $^*$ adjacent to the reported lower bounds.

\begin{table}
	\caption{ Upper and lower  bounds on $S(2,t_o,q)$}
	\label{tab:ASYMAONT_CompRes}
	\centering
	\begin{tabular}{ c| c|  c | l  || c| l} 
		  $t_o$& $q$ & lower bound & reference & upper bound & reference \\ \hline\hline
		   3 & 2 & 4  & Example \ref{E232} & 4 & Theorem \ref{Thrm:2_to_AsymAONT_bound} \\
		   4 & 2 & 5  & Example \ref{A242} & 5 & exhaustive search \\
		   5 & 2 & 8  & Example \ref{A252} & 8 & exhaustive search\\
		   6 & 2 & $10$  & Example \ref{A262} &13& Theorem \ref{Thrm:2_to_AsymAONT_bound}\\
		   7 & 2 & $12$ & Example \ref{A272} & 16& Theorem \ref{Thrm:2_to_AsymAONT_bound}\\
		   8 & 2 & $13$ & Example \ref{A282} & 19 & Theorem \ref{Thrm:2_to_AsymAONT_bound}\\ \hline
		   3 & 3 & 6  & Example \ref{A233}& 6& Theorem \ref{Thrm:2_to_AsymAONT_bound}\\
		   4 & 3 & $8$ & Example \ref{A243}& 9& Theorem \ref{Thrm:2_to_AsymAONT_bound}\\
		   5 & 3 & $9$  & Example \ref{A253}& 13& Theorem \ref{Thrm:2_to_AsymAONT_bound}\\
		   6 & 3 & $13$ & Example \ref{A263}& 17& Theorem \ref{Thrm:2_to_AsymAONT_bound}\\ \hline
		  % 7 & 3 & $13$ & Example \ref{A263} and Lemma \ref{Lem:Asym_L_to} & 21& Theorem \ref{Thrm:2_to_AsymAONT_bound}\\ \hline
		   3 & 4 & $6$ & Example \ref{A234}& 8& Theorem \ref{Thrm:2_to_AsymAONT_bound}\\
		   4 & 4 & $9 $  & Example \ref{A244}& 11& Theorem \ref{Thrm:2_to_AsymAONT_bound}\\
		   5 & 4 & $11 $ & Example \ref{A254}& 16& Theorem \ref{Thrm:2_to_AsymAONT_bound}\\ \hline
		   3 & 5 & $8 $ & Example \ref{A235}& 10& Theorem \ref{Thrm:2_to_AsymAONT_bound}\\
		   4 & 5 & $10 $ & Example \ref{A245}& 14& Theorem \ref{Thrm:2_to_AsymAONT_bound}\\ \hline   
		   3 & 7 & $8 $ & Example \ref{A237}& 14& Theorem \ref{Thrm:2_to_AsymAONT_bound} \\  	  	 
	\end{tabular}
\end{table}

\section{Discussion}
\label{summary.sec}
 
There are many open problems involving asymmetric AONTs. It would certainly be of interest
to find improved necessary conditions and general constructions. The first cases are when $t_i = 2$. A starting point would be to close the gaps in the bounds reported in Table \ref{tab:ASYMAONT_CompRes}.

As mentioned in Remark \ref{converse.rem}, it is unknown if the converse of part 2 of Theorem \ref{asym.thm} is true when $t_i < t_o$. We feel that this question is worthy of further study. 

\section*{Acknowledgements}

We thank Bill Martin for helpful discussions.

%\bibliographystyle{plain}
%\bibliography{GAONT}

\appendix
\section{Appendix}

%Note that all matrices presented in the Appendix are \emph{transposes} of the matrices $M$.

\begin{Example}
	\label{A242}
	A linear $(2,4,5,2)$-AONT:
	\[
	\left(
	\begin{array}{ccccc}
		1  & 1  & 1  & 0  & 1 \\ 
		1  & 0  & 0  & 1  & 0 \\ 
		0  & 1  & 0  & 1  & 0 \\ 
		0  & 0  & 1  & 1  & 0 \\ 
		0  & 0  & 0  & 0  & 1 
	\end{array}
	\right).
	\]
\end{Example}

\begin{Example}
		\label{A252}
		A linear $(2,5,8,2)$-AONT:
	\[
	\left(
	\begin{array}{cccccccc}
	1  & 1  & 1  & 0  & 0  & 0  & 1  & 1 \\ 
	1  & 1  & 0  & 1  & 0  & 1  & 1  & 0 \\ 
	1  & 0  & 1  & 0  & 1  & 1  & 0  & 0 \\ 
	1  & 0  & 0  & 1  & 1  & 0  & 1  & 0 \\ 
	0  & 1  & 1  & 0  & 1  & 0  & 1  & 0 \\ 
	0  & 1  & 0  & 1  & 1  & 0  & 0  & 1 \\ 
	0  & 0  & 1  & 1  & 0  & 1  & 1  & 1 \\ 
	0  & 0  & 0  & 0  & 1  & 1  & 1  & 1	
	\end{array}
	\right) .
	\]
\end{Example}

\begin{Example}
		\label{A262}
	A linear $(2,6,10,2)$-AONT:
	\[
	\left(
	\begin{array}{cccccccccc}
		1  & 1  & 0  & 1  & 0  & 0  & 0  & 1  & 1  & 1 \\ 
		1  & 1  & 0  & 0  & 1  & 1  & 0  & 0  & 1  & 0 \\ 
		1  & 0  & 1  & 1  & 0  & 1  & 0  & 1  & 0  & 0 \\ 
		1  & 0  & 1  & 0  & 1  & 0  & 1  & 1  & 1  & 0 \\ 
		1  & 0  & 0  & 1  & 1  & 1  & 1  & 0  & 0  & 1 \\ 
		0  & 1  & 1  & 1  & 0  & 1  & 1  & 0  & 1  & 0 \\ 
		0  & 1  & 1  & 0  & 1  & 1  & 0  & 1  & 0  & 1 \\ 
		0  & 1  & 0  & 1  & 1  & 0  & 1  & 1  & 0  & 0 \\ 
		0  & 0  & 1  & 1  & 1  & 0  & 0  & 0  & 1  & 1 \\ 
		0  & 0  & 0  & 0  & 0  & 1  & 1  & 1  & 1  & 1
	\end{array}
	\right) .
	\]\end{Example}

\begin{Example}
		\label{A272}
	A linear $(2,7,12,2)$-AONT:
	\[
	\left(
	\begin{array}{cccccccccccc}
	1  & 1  & 1  & 0  & 0  & 0  & 1  & 0  & 0  & 1  & 0  & 1 \\ 
	1  & 1  & 0  & 1  & 0  & 0  & 0  & 1  & 1  & 1  & 1  & 0 \\ 
	1  & 1  & 0  & 0  & 1  & 1  & 0  & 0  & 1  & 0  & 0  & 1 \\ 
	1  & 0  & 1  & 1  & 0  & 1  & 0  & 1  & 0  & 0  & 0  & 1 \\ 
	1  & 0  & 1  & 0  & 1  & 0  & 1  & 1  & 1  & 0  & 1  & 0 \\ 
	1  & 0  & 0  & 1  & 1  & 1  & 1  & 0  & 0  & 1  & 1  & 0 \\ 
	0  & 1  & 1  & 1  & 0  & 1  & 1  & 0  & 1  & 0  & 1  & 0 \\ 
	0  & 1  & 1  & 0  & 1  & 1  & 0  & 1  & 0  & 1  & 1  & 0 \\ 
	0  & 1  & 0  & 1  & 1  & 0  & 1  & 1  & 0  & 0  & 0  & 1 \\ 
	0  & 0  & 1  & 1  & 1  & 0  & 0  & 0  & 1  & 1  & 0  & 1 \\ 
	0  & 0  & 0  & 0  & 0  & 1  & 1  & 1  & 1  & 1  & 0  & 1 \\ 
	0  & 0  & 0  & 0  & 0  & 0  & 0  & 0  & 0  & 0  & 1  & 1
	\end{array}
	\right) .
	\]
\end{Example}

\begin{Example}
		\label{A282}
	A linear $(2,8,13,2)$-AONT:
	\[
	\left(
	\begin{array}{ccccccccccccc}
		1  & 1  & 1  & 0  & 0  & 0  & 1  & 0  & 0  & 1  & 0  & 1  & 0 \\ 
		1  & 1  & 0  & 1  & 0  & 0  & 0  & 1  & 1  & 1  & 1  & 0  & 0 \\ 
		1  & 1  & 0  & 0  & 1  & 1  & 0  & 0  & 1  & 0  & 0  & 1  & 1 \\ 
		1  & 0  & 1  & 1  & 0  & 1  & 0  & 1  & 0  & 0  & 0  & 1  & 1 \\ 
		1  & 0  & 1  & 0  & 1  & 0  & 1  & 1  & 1  & 0  & 1  & 0  & 1 \\ 
		1  & 0  & 0  & 1  & 1  & 1  & 1  & 0  & 0  & 1  & 1  & 0  & 0 \\ 
		0  & 1  & 1  & 1  & 0  & 1  & 1  & 0  & 1  & 0  & 1  & 0  & 1 \\ 
		0  & 1  & 1  & 0  & 1  & 1  & 0  & 1  & 0  & 1  & 1  & 0  & 0 \\ 
		0  & 1  & 0  & 1  & 1  & 0  & 1  & 1  & 0  & 0  & 0  & 1  & 1 \\ 
		0  & 0  & 1  & 1  & 1  & 0  & 0  & 0  & 1  & 1  & 0  & 1  & 0 \\ 
		0  & 0  & 0  & 0  & 0  & 1  & 1  & 1  & 1  & 1  & 0  & 1  & 0 \\ 
		0  & 0  & 0  & 0  & 0  & 0  & 0  & 0  & 0  & 0  & 1  & 1  & 0 \\ 
		0  & 0  & 0  & 0  & 0  & 0  & 0  & 0  & 0  & 0  & 0  & 0  & 1
	\end{array}
	\right) .
	\]
\end{Example}

\begin{Example}
		\label{A233}
	A linear $(2,3,6,3)$-AONT:
	\[
	\left(
	\begin{array}{cccccc}
		0  & 1  & 1  & 1  & 1  & 1 \\ 
		1  & 0  & 1  & 1  & 2  & 2 \\ 
		1  & 1  & 0  & 2  & 1  & 2 \\ 
		1  & 1  & 2  & 0  & 2  & 1 \\ 
		1  & 2  & 1  & 2  & 0  & 1 \\ 
		1  & 2  & 2  & 1  & 1  & 0
	\end{array}
	\right) .
	\]\end{Example}

\begin{Example}
		\label{A243}
	A linear $(2,4,8,3)$-AONT:
	\[
	\left(
	\begin{array}{cccccccc}
		0  & 0  & 0  & 1  & 1  & 1  & 1  & 2 \\ 
		0  & 1  & 1  & 0  & 1  & 2  & 2  & 0 \\ 
		0  & 1  & 1  & 1  & 0  & 0  & 1  & 1 \\ 
		1  & 0  & 1  & 0  & 1  & 0  & 1  & 2 \\ 
		1  & 0  & 1  & 1  & 0  & 1  & 2  & 0 \\ 
		1  & 1  & 0  & 2  & 2  & 0  & 1  & 0 \\ 
		1  & 1  & 2  & 0  & 1  & 1  & 2  & 1 \\ 
		1  & 2  & 0  & 1  & 0  & 2  & 1  & 1
	\end{array}
	\right) .
	\]
\end{Example}

\begin{Example}
		\label{A253}
	A linear $(2,5,9,3)$-AONT:
	\[
	\left(
	\begin{array}{ccccccccc}
		0  & 0  & 0  & 0  & 0  & 0  & 0  & 1  & 1 \\ 
		0  & 0  & 0  & 1  & 1  & 1  & 1  & 1  & 1 \\ 
		0  & 1  & 1  & 0  & 0  & 1  & 1  & 1  & 2 \\ 
		0  & 1  & 1  & 1  & 2  & 0  & 2  & 2  & 0 \\ 
		1  & 0  & 1  & 0  & 1  & 0  & 2  & 2  & 1 \\ 
		1  & 0  & 1  & 1  & 0  & 2  & 1  & 2  & 0 \\ 
		1  & 1  & 0  & 2  & 0  & 0  & 1  & 2  & 1 \\ 
		1  & 1  & 2  & 0  & 1  & 1  & 2  & 1  & 0 \\ 
		1  & 2  & 0  & 1  & 2  & 1  & 1  & 2  & 0
	\end{array}
	\right) .
	\]
\end{Example}

\begin{Example}
		\label{A263}
		A linear $(2,6,13,3)$-AONT:
    \[
    \left(
    \begin{array}{ccccccccccccc}
        0  & 0  & 0  & 0  & 0  & 0  & 0  & 1  & 1  & 1  & 1  & 1  & 2 \\ 
        0  & 0  & 0  & 1  & 1  & 1  & 1  & 0  & 1  & 1  & 1  & 2  & 2 \\ 
        0  & 1  & 1  & 0  & 0  & 1  & 1  & 0  & 1  & 1  & 2  & 2  & 0 \\ 
        0  & 1  & 1  & 1  & 1  & 0  & 0  & 1  & 1  & 1  & 0  & 1  & 1 \\ 
        0  & 1  & 1  & 1  & 2  & 1  & 2  & 1  & 0  & 0  & 1  & 2  & 2 \\ 
        1  & 0  & 1  & 0  & 2  & 1  & 1  & 2  & 0  & 2  & 0  & 1  & 2 \\ 
        1  & 0  & 1  & 1  & 0  & 0  & 2  & 0  & 1  & 2  & 2  & 1  & 2 \\ 
        1  & 0  & 1  & 1  & 1  & 2  & 0  & 2  & 2  & 0  & 1  & 2  & 0 \\ 
        1  & 1  & 0  & 2  & 0  & 2  & 0  & 1  & 1  & 2  & 2  & 2  & 2 \\ 
        1  & 1  & 2  & 0  & 1  & 1  & 0  & 0  & 2  & 1  & 2  & 1  & 2 \\ 
        1  & 1  & 2  & 0  & 2  & 0  & 2  & 2  & 1  & 0  & 1  & 1  & 1 \\ 
        1  & 2  & 0  & 1  & 2  & 0  & 1  & 1  & 0  & 1  & 2  & 2  & 0 \\ 
        1  & 2  & 0  & 2  & 1  & 1  & 2  & 0  & 1  & 0  & 0  & 2  & 1
    \end{array}
    \right) .
    \]

\end{Example}

\begin{Example}
		\label{A234}
	A linear $(2,3,6,4)$-AONT:
	\[
	\left(
	\begin{array}{cccccc}
		0  & 0  & 1  & 1  & 1  & 1 \\ 
		0  & 1  & 0  & 1  & 1  & 2 \\ 
		1  & 0  & 0  & 1  & 2  & 1 \\ 
		1  & 1  & 1  & 0  & 1  & 3 \\ 
		1  & 2  & 3  & 0  & 1  & 2 \\ 
		1  & 3  & 2  & 1  & 0  & 2
	\end{array}
	\right) .
	\]

\end{Example}

\begin{Example}
		\label{A244}
	A linear $(2,4,9,4)$-AONT:
	\[
	\left(
	\begin{array}{ccccccccc}
		0  & 0  & 0  & 0  & 1  & 1  & 1  & 1  & 1 \\ 
		0  & 1  & 1  & 1  & 0  & 1  & 1  & 1  & 2 \\ 
		0  & 1  & 1  & 1  & 1  & 0  & 1  & 2  & 1 \\ 
		1  & 0  & 1  & 2  & 0  & 1  & 2  & 3  & 1 \\ 
		1  & 0  & 1  & 2  & 1  & 0  & 3  & 1  & 2 \\ 
		1  & 1  & 0  & 3  & 0  & 2  & 1  & 1  & 1 \\ 
		1  & 1  & 2  & 0  & 3  & 3  & 2  & 1  & 3 \\ 
		1  & 2  & 0  & 3  & 1  & 0  & 1  & 3  & 2 \\ 
		1  & 2  & 3  & 0  & 2  & 1  & 1  & 3  & 3 
	\end{array}
	\right) .
	\]
\end{Example}

\begin{Example}
		\label{A254}
	A linear $(2,5,11,4)$-AONT:
	\[
	\left(
	\begin{array}{ccccccccccc}
		0  & 0  & 0  & 0  & 0  & 1  & 1  & 1  & 1  & 1  & 1 \\ 
		0  & 0  & 0  & 0  & 1  & 1  & 1  & 1  & 1  & 1  & 1 \\ 
		0  & 1  & 1  & 1  & 0  & 0  & 1  & 1  & 1  & 1  & 2 \\ 
		0  & 1  & 1  & 1  & 1  & 1  & 0  & 1  & 1  & 2  & 3 \\ 
		1  & 0  & 1  & 2  & 0  & 1  & 0  & 1  & 2  & 3  & 1 \\ 
		1  & 0  & 1  & 2  & 1  & 0  & 1  & 1  & 3  & 2  & 1 \\ 
		1  & 1  & 0  & 3  & 0  & 1  & 1  & 2  & 3  & 1  & 3 \\ 
		1  & 1  & 2  & 0  & 3  & 0  & 2  & 1  & 3  & 3  & 2 \\ 
		1  & 2  & 0  & 3  & 3  & 0  & 0  & 1  & 2  & 3  & 3 \\ 
		1  & 2  & 3  & 0  & 2  & 1  & 0  & 2  & 1  & 3  & 3 \\ 
		1  & 3  & 2  & 1  & 1  & 2  & 1  & 0  & 3  & 2  & 3
	\end{array}
	\right) .
	\]\end{Example}

\begin{Example}
		\label{A235}
	A linear $(2,3,8,5)$-AONT:
	\[
	\left(
	\begin{array}{cccccccc}
		0  & 1  & 1  & 1  & 1  & 1  & 1  & 1 \\ 
		0  & 1  & 1  & 1  & 1  & 2  & 2  & 4 \\ 
		1  & 0  & 1  & 2  & 3  & 2  & 4  & 1 \\ 
		1  & 0  & 1  & 2  & 4  & 3  & 1  & 2 \\ 
		1  & 1  & 0  & 3  & 2  & 1  & 3  & 4 \\ 
		1  & 1  & 0  & 3  & 4  & 4  & 2  & 2 \\ 
		1  & 2  & 3  & 0  & 1  & 3  & 2  & 1 \\ 
		1  & 2  & 3  & 0  & 1  & 4  & 1  & 4
	\end{array}
	\right) .
	\]
\end{Example}

\begin{Example}
		\label{A245}
	A linear $(2,4,10,5)$-AONT:
	\[
	\left(
	\begin{array}{cccccccccc}
		0  & 0  & 0  & 0  & 0  & 0  & 1  & 1  & 1  & 1 \\ 
		0  & 1  & 1  & 1  & 1  & 1  & 0  & 1  & 1  & 1 \\ 
		0  & 1  & 1  & 1  & 1  & 4  & 1  & 0  & 1  & 2 \\ 
		1  & 0  & 1  & 2  & 3  & 1  & 0  & 1  & 4  & 2 \\ 
		1  & 0  & 1  & 2  & 3  & 4  & 1  & 0  & 2  & 1 \\ 
		1  & 1  & 0  & 3  & 4  & 2  & 0  & 1  & 2  & 2 \\ 
		1  & 1  & 0  & 3  & 4  & 3  & 2  & 2  & 1  & 4 \\ 
		1  & 2  & 3  & 0  & 1  & 1  & 1  & 4  & 3  & 4 \\ 
		1  & 2  & 3  & 0  & 1  & 4  & 4  & 3  & 1  & 2 \\ 
		1  & 3  & 4  & 1  & 0  & 2  & 2  & 2  & 2  & 3 
	\end{array}
	\right)
	\]
\end{Example}

\begin{Example}
	\label{A237}
	A linear $(2,3,8,7)$-AONT:
	\[
	\left(
	\begin{array}{ccccccccc}
		0  & 0  & 0  & 0  & 1  & 1  & 1  & 1 \\ 
		0  & 1  & 1  & 1  & 0  & 1  & 1  & 1 \\ 
		1  & 0  & 1  & 2  & 0  & 1  & 2  & 3 \\ 
		1  & 1  & 0  & 3  & 1  & 1  & 2  & 4 \\ 
		1  & 2  & 3  & 5  & 3  & 0  & 1  & 3 \\ 
		1  & 3  & 4  & 6  & 6  & 0  & 1  & 2 \\ 
		1  & 4  & 5  & 0  & 5  & 2  & 5  & 1 \\ 
		1  & 5  & 6  & 4  & 2  & 3  & 0  & 1
	\end{array}
	\right) .
	\]
\end{Example}

\end{document}